\documentclass[11pt,a4paper]{article}
\usepackage[utf8]{inputenc}
\usepackage[T1]{fontenc}
\usepackage{amsmath,amssymb,amsthm,mathrsfs}
\usepackage[margin=2.55cm]{geometry}
\usepackage[svgnames]{xcolor}
\usepackage{graphicx}
\usepackage{enumitem}
\usepackage[colorlinks=true,linkcolor=NavyBlue,citecolor=NavyBlue,urlcolor=NavyBlue]{hyperref}

\definecolor{melicolor}{RGB}{186,85,211}   
\definecolor{danicolor}{RGB}{86,185,251}   
\definecolor{tomascolor}{RGB}{186,120,25}   

\newtheorem{theorem}{Theorem}[section]
\newtheorem{lemma}[theorem]{Lemma}
\newtheorem{corollary}[theorem]{Corollary}
\newtheorem{proposition}[theorem]{Proposition}
\theoremstyle{definition}
\newtheorem{definition}[theorem]{Definition}
\newtheorem{example}[theorem]{Example}
\newtheorem{remark}[theorem]{Remark}

\newcommand{\N}{\mathbb{N}}
\newcommand{\Z}{\mathbb{Z}}
\newcommand{\R}{\mathbb{R}}
\newcommand{\C}{\mathbb{C}}
\newcommand{\D}{\mathbb{D}}
\newcommand{\T}{\mathbb{T}}
\newcommand{\Dinf}{\mathbb{D}^\infty_2}
\newcommand{\Dcl}{\overline{\mathbb{D}}^{\,\infty}}
\newcommand{\Tinf}{\mathbb{T}^\infty}

\newcommand{\cH}{\mathcal{H}}
\newcommand{\cT}{\mathcal{T}}
\newcommand{\cB}{\mathfrak{B}}
\newcommand{\HN}{H_2(\Dinf,\C^2)}
\newcommand{\pint}[1]{\left\langle #1\right\rangle}
\newcommand{\spanc}{\overline{\operatorname{span}}}
\newcommand{\smin}{\sigma_{\min}}
\newcommand{\smax}{\sigma_{\max}}
\newcommand{\MN}{M_2(\C)}
\renewcommand{\Re}{\operatorname{Re}}
\DeclareMathOperator{\adj}{adj}

\title{Dilation systems with two generators\\ and the optimal Riesz bounds of a basis of ReLU networks}
\author{Jorge Antezana\thanks{Supported by PID2024-160033NB-I00 funded by MICIU/AEI/10.13039/501100011033 and by FEDER, UE.}
\and Daniel Carando, \thanks{Supported by CONICET PIP 11220250100329CO and UBACYT 20020250200096BA.}
\and Tom\'as Fern\'andez Vidal\thanks{Supported by CONICET PIP 11220250100329CO and UBACYT 20020250200096BA.}
\and Melisa Scotti\thanks{Supported by CONICET PIP 11220250100329CO and UBACYT 20020250200096BA.} }

\begin{document}
\maketitle

\begin{abstract}
For two $1$-periodic functions $\psi_1,\psi_2\in L^2(0,1)$ with mean zero we characterize when $\{1\}\cup\{\psi_1(\alpha\cdot x),\psi_2(\alpha\cdot x)\}_{\alpha\in\Z^d,\,\alpha\succ0}$ is a Riesz basis of $L^2([0,1]^d)$, in terms of a $2\times2$ matrix of Dirichlet series, and we compute its optimal bounds. The tool is a $2\times2$ matrix version of the theorem of Hedenmalm, Lindqvist and Seip for systems of dilations with two generators, proved through the reproducing kernel of the Hardy space of the infinite polydisk. In particular, we get the optimal bounds $2/3$ and $3/2$ for $\psi_1,\psi_2$ the triangle waves considered by Daubechies, DeVore, Foucart, Hanin and Petrova in $L^2(0,1)$, and the extended version in $L^2([0,1]^d)$ defined by Schneider and Vyb\'\i ral.
\end{abstract}

\noindent\textbf{Keywords:} Riesz bases, ReLU neural networks, dilation systems, Dirichlet series, Hardy spaces in infinitely many variables.

\noindent\textbf{2020 MSC:} Primary 42C15, 30B50. Secondary 30H10, 47A13, 68T07.

\section{Introduction}\label{sec intro}

The real trigonometric system $\{1\}\cup\{\sqrt2\cos(2\pi nx),\sqrt2\sin(2\pi nx)\}_{n\in\N}$ is an orthonormal basis of $L^2(0,1)$. Daubechies, DeVore, Foucart, Hanin and Petrova \cite{DDFHP2022} observed that the cosine and the sine can be replaced by two $1$-periodic triangle waves $\mathcal C$ and $\mathcal S$ (see \eqref{eq triangle def} and Figure~\ref{fig profiles}). These are continuous and piecewise linear, hence exactly realizable by ReLU neural networks, and yet $\{1\}\cup\{\sqrt3\,\mathcal C(nx),\sqrt3\,\mathcal S(nx)\}_{n\in\N}$ is still a Riesz basis of $L^2(0,1)$. Schneider and Vyb\'\i ral \cite{SchneiderVybiral2024} introduced the multivariate system
\[
\mathfrak R_d=\{1\}\cup\big\{\sqrt3\,\mathcal C(\alpha\cdot x),\ \sqrt3\,\mathcal S(\alpha\cdot x):\ \alpha\in\Z^d,\ \alpha\succ0\big\},
\]
where $\alpha\succ0$ means that the first nonzero coordinate of $\alpha$ is positive. They proved that it is a Riesz basis of $L^2([0,1]^d)$ with bounds $1/2$ and $3/2$ for every $d$, and asked whether these constants are optimal. Kulbatov, Lang, Schneider and Vyb\'\i ral \cite{KLSV2025} improved the lower bound to $0.5787\ldots$ and showed that $\mathfrak R_d$ is also a Schauder basis of $L_q([0,1]^d)$, $1<q<\infty$. Our first result, Theorem~\ref{thm relu}, answers the optimality question: for every $d\ge1$ the optimal Riesz bounds of $\mathfrak R_d$ in $L^2([0,1]^d)$ are $A=2/3$ and $B=3/2$ (in the recent article \cite{BillnerSchneider2026} these bounds were obtained independently, see comments in next paragraph). To get the lower bound, we use that the Fourier coefficients of $\sqrt3\,\mathcal C$ with respect to $\sqrt2\cos(2\pi nt)$ are $\mu n^{-2}$ for odd $n$ and $0$ for even $n$, with $\mu=4\sqrt6/\pi^2$. Then,  following Hedenmalm, Lindqvist and Seip \cite{HeLiSe}, we consider the corresponding Dirichlet series $D_{\mathcal C}(s)=\mu\sum_{n\ \rm odd}n^{-2-s}=\mu\prod_{p>2}(1-p^{-2}p^{-s})^{-1}$, which is an Euler product. The optimal Riesz bounds of $\mathfrak R_d$ turn out to be the squared infimum and supremum of $|D_{\mathcal C}|$, and of the analogous series $|D_{\mathcal S}|$, on the half-plane $\Re s>0$. From Bohr's point of view each $p^{-s}$ is an independent variable in the unit disk, so these extremes are the products of the extremes of the factors, $\mu\prod_{p>2}(1+p^{-2})^{-1}=\sqrt{2/3}$ and $\mu\prod_{p>2}(1-p^{-2})^{-1}=\sqrt{3/2}$.

While this paper was being completed, Billner and Schneider \cite{BillnerSchneider2026} obtained the lower bound $2/3$ for $\mathfrak R_d$, in every dimension, by a different method. 
For a single sequence $\tau$ generating both the cosine and the sine part, they also characterize the Riesz bases of this type, with optimal constants, through the multipliers of the Hardy space of Dirichlet series, and they treat mixed systems whose two parts come from different sequences. In the language of this paper, the $2\times2$ matrix associated with the pair of generators (the symbol) is diagonal.

Our main result, Theorem~\ref{thm two profiles},  is a characterization for two arbitrary generators . Given two $1$-periodic functions $\psi_1,\psi_2\in L^2(0,1)$ with mean zero, consider
\[
\mathfrak R_d(\psi_1,\psi_2)=\{1\}\cup\big\{\psi_1(\alpha\cdot x),\ \psi_2(\alpha\cdot x):\ \alpha\in\Z^d,\ \alpha\succ0\big\}.
\]
If we write the Fourier coefficients of $\psi_i$ with respect to $\sqrt2\cos(2\pi nt)$ and $\sqrt2\sin(2\pi nt)$ as $a^{(i)}_n$ and $b^{(i)}_n$, let $D_{a^{(i)}}(s)=\sum_na^{(i)}_nn^{-s}$ and $D_{b^{(i)}}(s)=\sum_nb^{(i)}_nn^{-s}$. We also let $\mathcal H_\infty$ be the algebra of Dirichlet series that converge to bounded functions in the half-plane $\Re s>0$. Theorem~\ref{thm two profiles}, in the form of Remark \ref{rem del teo}, says that, for every $d\ge1$, $\mathfrak R_d(\psi_1,\psi_2)$ is a Riesz basis of $L^2([0,1]^d)$ if and only if the four series $D_{a^{(i)}},D_{b^{(i)}}$ belong to $\mathcal H_\infty$ and
\[
\frac{1}{D_{a^{(1)}}D_{b^{(2)}}-D_{a^{(2)}}D_{b^{(1)}}}\in\mathcal H_\infty.
\]
Moreover, the optimal Riesz bounds are then $\min\{1,A_\Theta\}$ and $\max\{1,B_\Theta\}$, where $A_\Theta$ and $B_\Theta$ are the extreme squared singular values, over the infinite polydisk, of the $2\times2$ matrix of Bohr lifts of the four series. In particular, they do not depend on $d$. For the triangle waves the matrix is diagonal, with entries the Bohr lifts of $D_{\mathcal C}$ and $D_{\mathcal S}$, and Theorem~\ref{thm relu} is the resulting computation. Theorem~\ref{thm two profiles} also covers a single generator dilated along all $\alpha\neq0$ (Corollary~\ref{cor general profile}, Example~\ref{ex single profile}), and in Example~\ref{ex nondiagonal} we work out a pair with a triangular, non-diagonal matrix, for which the bounds come from $2\times2$ singular values and are again explicit.

Let us describe the method (see section \ref{subsec notation} for notation). For $\varphi\in L^2(0,1)$, extended to $\R$ as an odd $2$-periodic function $\varphi(x)=\sum_na_n\sqrt2\sin(\pi nx)$, Hedenmalm, Lindqvist and Seip \cite{HeLiSe} proved that $\{\varphi(n\,\cdot)\}_{n\in\N}$ is a Riesz basis of $L^2(0,1)$ if and only if $D_\varphi(s)=\sum_na_nn^{-s}$ and $1/D_\varphi$ belong to $\mathcal H_\infty$ (see \cite{ACS_splitting} for  a study of  Bessel and Riesz sequence properties separately, and \cite{Nikolski2012,DanGuo2021} for the question on completeness). Under the Bohr lift $n^{-s}\mapsto z^{\alpha(n)}$, with $\alpha(n)$ the exponents in the prime factorization of $n$, $D_\varphi$ becomes a holomorphic function on the infinite polydisk and the dilation by $k$ becomes multiplication by $z^{\alpha(k)}$. The systems $\mathfrak R_d(\psi_1,\psi_2)$ have, in each primitive direction $\beta\in\Z^d$, two generators $\psi_1(\beta\cdot x)$, $\psi_2(\beta\cdot x)$ and two orthogonal ``frequency lines'', spanned by the dilations of $\cos(2\pi\beta\cdot x)$ and of $\sin(2\pi\beta\cdot x)$. We therefore prove a version of the Hedenmalm--Lindqvist--Seip theorem for two generators on two frequency lines. Here,  $D_\varphi$ is replaced by the $2\times2$ matrix of Dirichlet series formed with the coefficients of each generator along each line (Proposition~\ref{prop matrix}): the system is a Riesz basis if and only if the entries and the  inverse of the determinant belong to $\mathcal H_\infty$. Moreover, its optimal bounds are the extreme squared singular values of the Bohr lift of the matrix. Theorem~\ref{thm two profiles} follows by applying this in each primitive direction. The idea of the proof is the following: after the Bohr lift, the synthesis operator of the system becomes multiplication by the matrix on the $\C^2$-valued Hardy space of the infinite polydisk, and the reproducing kernel of that space turns the norms of the operator and of its inverse into pointwise norms of the matrix and of its inverse. Finally, the identification of $\mathcal H_\infty$ with the algebra of bounded holomorphic functions on the polydisk \cite{DirSer,HeLiSe} translates these pointwise conditions on the matrix into conditions on the Dirichlet series.

In  Section~\ref{sec results} we state our  main results and  present different examples.  Section~\ref{sec proofs} contains all the proofs.



\subsection{Notation and conventions}\label{subsec notation}

\paragraph{Dirichlet series and the Bohr lift.} A Dirichlet series is a formal expression of the form  $$\sum_{n=1}^\infty a_n n^{-s},$$ where the coefficients $a_n$ are complex numbers and $s$ is a complex variable. The Hardy space $\mathcal{H}_2$ consists of those Dirichlet series whose coefficients sequence $(a_n)_n$ belong to $\ell_2$, with norm $$\Vert D\Vert_{\mathcal{H}_2} = \Big(\sum_{n=1}^\infty |a_n|^2\Big)^{1/2}.$$

In \cite{HeLiSe}, Hedenmalm, Lindqvist, and Seip characterized the multipliers of $\mathcal{H}_2$. That is, the functions $\varphi$ for which $\varphi D \in \mathcal{H}_2$ for every $D\in \mathcal{H}_2$. Since every Dirichlet series in $\mathcal{H}_2$ converges in the half-plane $\mathbb{C}_{1/2} :=\{s\in \mathbb{C}: \mbox{Re}(s)>1/2\},$
 and $D = 1$ belongs to $\mathcal{H}_2$, it follows that every multiplier must  be a Dirichlet series convergent on $\mathbb{C}_{1/2}.$  The authors proved that if $D$ is a multiplier, then $D$ actually converges in the whole right half-plane $\mathbb{C}_0:=\{s\in \mathbb{C}: \mbox{Re}(s)>0\}$, and defines there a bounded holomorphic function. Consequently, the multiplier space of $\mathcal{H}_2$ is
$$\mathcal{H}_\infty :=\Big\{ D(s)=\sum_{n=1}^\infty a_n n^{-s} :\; \Vert D\Vert_{\mathcal{H}_\infty}=\sup\limits_{\text{Re}(s)>0}\vert D(s) \vert <\infty\Big\}.$$

This space of multipliers is isometrically isomorphic to the Hardy spaces on the infinite polytorus $\mathbb{T}^\infty$, the compact group endowed with the product topology, defined by
$$H_\infty(\mathbb{T}^\infty) = \Big\{ f\in L_\infty(\mathbb{T}^\infty,m) :  \hat{f}(\alpha )=0 \,\mbox{ if } \alpha\not\in \mathbb{N}_0^{(\mathbb{N})}\Big\},$$
with $m$ the Haar measure, $\mathbb{Z}^{(\mathbb{N})}$ the set of eventually null sequences of integer numbers, and for $w = (w_1,w_2,\ldots)\in \mathbb{T}^\infty$ and $\alpha\in \mathbb{Z}^{(\mathbb{N})}$ we write $$w^\alpha=w_1^{\alpha_1}\cdots w_n^{\alpha_n} \cdots $$
If $f\in L_1(\mathbb{T}^\infty )$, $\hat{f}(\alpha)$ denotes the $\alpha$-th Fourier coefficient given by 
$$\hat{f}(\alpha)=\int_{\mathbb{T}^\infty} f(w)w^{-\alpha} dm (w).$$

The isomorphism is given by the Bohr transform $\mathcal{B}$, which connects formal power and Dirichlet series. Given $n\in \mathbb{N}$, there exists a unique $\alpha\in \mathbb{N}_0^{(\mathbb{N})}$ such that $n = \mathfrak{p}^\alpha$, being $\mathfrak{p}$ the sequence of prime numbers and $\mathfrak{p}^\alpha = p_1^{\alpha_1}\cdot p_2^{\alpha_2} \cdots$. Then \begin{equation}\label{Def Bohr}
\mathcal{B}\Big(\sum\limits_{\alpha\in \mathbb{N}_0^{(\mathbb{N})}} a_\alpha z^\alpha\Big) = \sum\limits_{n=1}^\infty a_n n^{-s},
\end{equation}
where $a_n = a_\alpha$ if $n=\mathfrak{p}^\alpha$, defines a bijection between the formal power and Dirichlet series, and an isomorphism between $\mathcal{H}_\infty$ and $H_\infty(\mathbb{T}^\infty)$.

We call the inverse of $\mathcal B$ the \emph{Bohr lift}: the Bohr lift of a Dirichlet series $D(s)=\sum_n a_nn^{-s}$ is the power series
\[
F(z)=\sum_{n=1}^\infty a_n\,z^{\alpha(n)},
\]
obtained by replacing each $p_i^{-s}$ by an independent variable $z_i$, with $\alpha(n)$ such that $n=\mathfrak{p}^{\alpha(n)}$. Since $\alpha(mn)=\alpha(m)+\alpha(n)$, the Bohr lift is multiplicative: the Bohr lift of a product of Dirichlet series is the product of their Bohr lifts. For instance, the Bohr lift of an Euler product $\prod_{i}(1-c_ip_i^{-s})^{-1}$ is $\prod_i(1-c_iz_i)^{-1}$.

The power series $\sum_{\alpha\in \mathbb{N}_0^{(\mathbb{N})}} \hat{f}(\alpha) w^\alpha$, associated with a $f\in H_\infty(\mathbb{T}^\infty)$, converges absolutely on the domain in $\ell_2$ given by
$$\mathbb{D}_2^\infty : = \{z\in \ell_2 : \vert z_n \vert <1 \mbox{ for all } n\in \mathbb{N}\},$$
and thus, defines a holomorphic function there (see, for example, \cite[Theorem~13.2]{DirSer}). Therefore, given $f\in H_\infty(\mathbb{T}^\infty)$, we can define the holomorphic function $F(z) = \sum_{\alpha\in \mathbb{N}_0^{(\mathbb{N})}} \hat{f}(\alpha) z^\alpha$. Then,  $\hat{f}(\alpha)$  coincides with the  $\alpha$-th Taylor coefficient of $F$, and moreover,
\begin{equation}\label{def norma Hp escalar} 
\Vert f \Vert_{H_\infty(\mathbb{T}^\infty)} = \sup_{z\in \mathbb{D}_2^\infty} \vert F(z)\vert.
\end{equation}
The  Hardy space of holomorphic functions on $\mathbb{D}_2^\infty  $ is defined as
$$H_\infty(\mathbb{D}_2^\infty) :=\Big\{F:\mathbb{D}_2^\infty \to \mathbb{C} \mbox{ holomorphic : such that the supremum in } \eqref{def norma Hp escalar} \mbox{ is finite}\Big\}.$$
This space is isometrically isomorphic to $H_\infty(\mathbb{T}^\infty)$ and the isomorphism maps power series to Fourier series with the same coefficients.

We denote by $H_2(\Dinf)$ the Hardy space of functions $F(z)=\sum_{\alpha\in\mathbb{N}_0^{(\mathbb{N})}}c_\alpha z^\alpha$ on $\Dinf$ with $\|F\|^2=\sum_\alpha|c_\alpha|^2<\infty$, and by $H_2(\mathbb{T}^\infty)$ the subspace of those $f\in L_2(\mathbb{T}^\infty,m)$ such that $\hat f(\alpha)=0$ for every $\alpha\notin\mathbb{N}_0^{(\mathbb{N})}$. Both spaces are isometrically isomorphic to $\mathcal{H}_2$ via the Bohr transform, since in all three spaces the norm is the $\ell_2$ norm of the coefficients.

In \cite{ACFS_multipliers}, we extend the study of multipliers to the setting of Dirichlet series with coefficients in a Banach space. We refer to \cite{DirSer} for the general theory of Dirichlet series and their connection with infinite dimensional Fourier and Complex analysis.

\paragraph{Riesz bases.} A family $\{f_i\}_{i\in I}$ in a Hilbert space is a \emph{Riesz basis with bounds $A,B$} ($0<A\le B$) if it is complete and
\[
A\sum|c_i|^2\le\Big\|\sum c_if_i\Big\|^2\le B\sum|c_i|^2
\]
for all finitely supported $(c_i)$. Equivalently, $\{f_i\}$ is the image of an orthonormal basis $\{e_i\}$ under a bounded invertible operator $\cT$, with $\|\cT\|^2\le B$ and $\|\cT^{-1}\|^{-2}\ge A$ \cite[Section~3.6]{Christensen2016}. The optimal Riesz bounds are the largest $A$ and the smallest $B$ for which these inequalities hold. It  is well known that  they are $\|\cT^{-1}\|^{-2}$ and $\|\cT\|^2$. The upper inequality alone, with constant $B$, says that $\{f_i\}$ is a \emph{Bessel sequence} with bound $B$.

\section{Main results}\label{sec results}

\subsection{Triangle waves: optimal Riesz bounds}\label{subsec triangle}

Let $\mathcal C$ be the $1$-periodic function with
\begin{equation}\label{eq triangle def}
\mathcal C(t)=1-4|t|\quad\text{for }|t|\le\tfrac12,\qquad\text{and}\qquad \mathcal S(t)=\mathcal C(t-\tfrac14).
\end{equation}
On $[0,1]$ these are the functions $\mathcal C(x)=4|x-\tfrac12|-1$ and $\mathcal S(x)=|2-4|x-\tfrac14||-1$ of \cite{DDFHP2022,SchneiderVybiral2024}. Both are continuous and piecewise linear, hence realized exactly on any bounded interval by a one-hidden-layer ReLU network.

\begin{definition}
    Given $d\geq 1$, for $\alpha\in \mathbb{Z}^d$ we say that $\alpha\succ0$ if the first nonzero coordinate of $\alpha$ is positive.
\end{definition}

The following is our first main result, which we prove as a consequence of Theorem \ref{thm two profiles}.

\begin{theorem}\label{thm relu}
For every $d\ge1$ the system
\[
\mathfrak R_d=\{1\}\cup\big\{\sqrt3\,\mathcal C(\alpha\cdot x),\ \sqrt3\,\mathcal S(\alpha\cdot x):\ \alpha\in\Z^d,\ \alpha\succ0\big\}
\]
is a Riesz basis of $L^2([0,1]^d)$ with optimal Riesz bounds $A=2/3$ and $B=3/2$.
\end{theorem}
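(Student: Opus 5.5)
We will deduce the statement from Theorem~\ref{thm two profiles}, in the form of Remark~\ref{rem del teo}, applied to $\psi_1=\sqrt3\,\mathcal C$ and $\psi_2=\sqrt3\,\mathcal S$. Both functions are $1$-periodic, in $L^2(0,1)$ and of mean zero, so the theorem applies. It remains to compute the $2\times2$ symbol of Dirichlet series and then its extreme singular values on the polydisk.

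\textbf{Step 1: Fourier coefficients.} Since $\mathcal C$ is even, all its sine coefficients vanish, so $b^{(1)}_n=0$. A direct integration gives $\int_0^1\mathcal C(t)\cos(2\pi nt)\,dt=4/(\pi^2n^2)$ for odd $n$ and $0$ for even $n$. Hence $a^{(1)}_n=\mu n^{-2}$ for odd $n$, with $\mu=4\sqrt6/\pi^2$.

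Next, $\mathcal S(t)=\mathcal C(t-\tfrac14)$. For odd $n$ we have $\cos(2\pi n(t-\tfrac14))=\cos(2\pi nt-\pi n/2)=\pm\sin(2\pi nt)$, with sign $(-1)^{(n-1)/2}$. Therefore $a^{(2)}_n=0$ and $b^{(2)}_n=\chi(n)\mu n^{-2}$ for odd $n$, where $\chi$ is the nontrivial character mod $4$. So the symbol is diagonal, with entries
\[
D_{\mathcal C}(s)=\mu\prod_{p>2}(1-p^{-2-s})^{-1},\qquad D_{\mathcal S}(s)=\mu\prod_{p>2}(1-\chi(p)p^{-2-s})^{-1},
\]
and both are Euler products because $n\mapsto\chi(n)n^{-2}\mathbf 1_{n\ \rm odd}$ is completely multiplicative.

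\textbf{Step 2: Bohr lifts and their extrema.} The Bohr lifts are $F_{\mathcal C}(z)=\mu\prod_{i\ge2}(1-p_i^{-2}z_i)^{-1}$ and $F_{\mathcal S}(z)=\mu\prod_{i\ge2}(1-\chi(p_i)p_i^{-2}z_i)^{-1}$. Since $\sum p^{-2}<\infty$, the products converge uniformly on $\Dinf$. For a diagonal matrix the singular values are the moduli of the diagonal entries. Hence
\[
A_\Theta=\inf_{z\in\Dinf}\min\{|F_{\mathcal C}|^2,|F_{\mathcal S}|^2\},\qquad B_\Theta=\sup_{z\in\Dinf}\max\{|F_{\mathcal C}|^2,|F_{\mathcal S}|^2\}.
\]
Each factor satisfies $(1+p^{-2})^{-1}<|1-cz|^{-1}<(1-p^{-2})^{-1}$ with $|c|=p^{-2}$. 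Because the variables are independent, the infimum and supremum of the product are the products of the factorwise bounds. These bounds are approached, for example, by taking $z_i=\mp r\,\overline{\chi(p_i)}$ for $i\le N$ and $z_i=0$ beyond, then letting $r\to1$ and $N\to\infty$; such $z$ lie in $\Dinf$ since they are finitely supported. This gives, for both entries,
\[
\inf|F|=\mu\prod_{p>2}(1+p^{-2})^{-1},\qquad \sup|F|=\mu\prod_{p>2}(1-p^{-2})^{-1}.
\]

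\textbf{Step 3: Evaluation.} We use $\prod_p(1-p^{-2})^{-1}=\zeta(2)=\pi^2/6$ and $\prod_p(1+p^{-2})^{-1}=\zeta(4)/\zeta(2)=\pi^2/15$. Removing the factor at $p=2$ gives $\prod_{p>2}(1-p^{-2})^{-1}=\tfrac34\cdot\pi^2/6=\pi^2/8$ and $\prod_{p>2}(1+p^{-2})^{-1}=\tfrac54\cdot\pi^2/15=\pi^2/12$. Multiplying by $\mu=4\sqrt6/\pi^2$ yields $\sup|F|=\sqrt6/2=\sqrt{3/2}$ and $\inf|F|=\sqrt6/3=\sqrt{2/3}$. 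Thus $A_\Theta=2/3$ and $B_\Theta=3/2$.

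\textbf{Step 4: Conclusion.} The entries are bounded on $\Dinf$, so they lie in $\mathcal H_\infty$. The determinant $F_{\mathcal C}F_{\mathcal S}$ is bounded below by $2/3$, so its inverse is a bounded holomorphic function and also lies in $\mathcal H_\infty$. By Theorem~\ref{thm two profiles}, $\mathfrak R_d$ is a Riesz basis with optimal bounds $\min\{1,2/3\}=2/3$ and $\max\{1,3/2\}=3/2$, for every $d$.

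The main obstacle is expected to be the careful sign bookkeeping for $\mathcal S$ and the normalization conventions, namely that the coefficients are taken against $\sqrt2\cos$ and $\sqrt2\sin$. The attainment of the extrema is routine once the product structure is in hand.
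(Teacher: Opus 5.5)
Your proposal is correct and follows essentially the same route as the paper: you compute the Fourier coefficients, recognize the diagonal symbol $\mu\operatorname{diag}(P,Q)$ of Euler products, bound each factor between $(1+p_i^{-2})^{-1}$ and $(1-p_i^{-2})^{-1}$, approach the extremes along finitely supported points with $z_i\to\mp1$ (twisted by $\chi_{-4}(p_i)$ for $Q$), and then apply Theorem~\ref{thm two profiles}. One small wording fix: for $F_{\mathcal C}$ the approximating points should be $z_i=\mp r$ rather than $\mp r\,\chi_{-4}(p_i)$, as in the paper, although either choice already suffices for the extremes of $\smin$ and $\smax$.
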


\subsection{Arbitrary pairs of generators}\label{subsec two profiles}

Let $\psi_1,\psi_2\in L^2(0,1)$ be $1$-periodic, with mean zero:
\[
\psi_i(t)=\sum_{n\ge1}\big(a^{(i)}_n\sqrt2\cos(2\pi nt)+b^{(i)}_n\sqrt2\sin(2\pi nt)\big).
\]
Our main examples are real-valued, but we allow complex coefficients, since the method requires it: the Bohr lift takes us to a Hardy space of holomorphic functions, which is a complex Hilbert space. This makes no difference for real generators, because the Riesz bounds in the real and in the complex space $L^2([0,1]^d)$ coincide: if the $\varphi_i$ are real and $c_i,d_i\in\R$, then $$\|\sum(c_i+id_i)\varphi_i\|^2=\|\sum c_i\varphi_i\|^2+\|\sum d_i\varphi_i\|^2,$$ which reduces the Riesz inequalities with complex coefficients to the real case, and $f\perp\varphi_i$ if and only if $\Re f\perp\varphi_i$ and $\operatorname{Im}f\perp\varphi_i$, which does the same for completeness.
We also assume that $\psi_1$ and $\psi_2$ have mean zero, and this is no loss of generality. Indeed, for every $\alpha\neq0$ the function $\psi_i(\alpha\cdot x)$ has the same mean as $\psi_i$, so if $\psi_i$ has mean $c\neq0$, then $\pint{1,\psi_i(\alpha\cdot x)}=c$ for every $\alpha\succ0$. Testing the Bessel inequality on the constant function $1$ then gives an infinite sum of terms equal to $|c|^2$, so $\{1\}\cup\{\psi_i(\alpha\cdot x)\}_{\alpha\succ0}$ is not even a Bessel sequence.

Our goal is to decide when the system $$\{1\}\cup\{\psi_1(\alpha\cdot x),\psi_2(\alpha\cdot x)\}_{\alpha\succ0}$$ is a Riesz basis of $L^2([0,1]^d)$. To this end we group the indices $\alpha$ by direction. Recall that $\alpha\succ0$ means that the first nonzero coordinate of $\alpha$ is positive. A vector $\beta\in\Z^d$ is \emph{primitive} if its coordinates have no common divisor $>1$. Every $\alpha\succ0$ can be written uniquely as $\alpha=n\beta$ with  $\beta\succ0$ primitive and  $n$  the greatest common divisor of the coordinates of $\alpha$. Thus the functions $\psi_i(\alpha\cdot x)$ with $\alpha$ in a fixed direction $\beta$ are the dilations $\psi_i(n\beta\cdot x)$ of the single function $\psi_i(\beta\cdot x)$.

Let $D_{a^{(i)}}(s)=\sum_na^{(i)}_nn^{-s}$ and $D_{b^{(i)}}(s)=\sum_nb^{(i)}_nn^{-s}$, let $F_{a^{(i)}},F_{b^{(i)}}\in H_2(\Dinf)$ be their Bohr lifts, and put
\[
\Theta_{\psi_1,\psi_2}=\begin{pmatrix}F_{a^{(1)}}&F_{a^{(2)}}\\ F_{b^{(1)}}&F_{b^{(2)}}\end{pmatrix}.
\]
We call $\Theta_{\psi_1,\psi_2}$ the \emph{symbol} of the system.

For a matrix $M$ we write $\smin(M)=\inf_{\|x\|=1}\|Mx\|$ and $\smax(M)=\sup_{\|x\|=1}\|Mx\|=\|M\|$, the singular values of $M$.
The following is our second main result and is proved in Section~\ref{subsec proof B}.

\begin{theorem}\label{thm two profiles}
For every $d\ge1$, the system
\[
\mathfrak R_d(\psi_1,\psi_2)=\{1\}\cup\big\{\psi_1(\alpha\cdot x),\ \psi_2(\alpha\cdot x):\ \alpha\in\Z^d,\ \alpha\succ0\big\}
\]
is a Riesz basis of $L^2([0,1]^d)$ if and only if the four entries of $\Theta_{\psi_1,\psi_2}$ are bounded on $\Dinf$ and $\inf_{z\in\Dinf}|\det\Theta_{\psi_1,\psi_2}(z)|>0$.
In that case the optimal Riesz bounds are
\[
A=\min\Big\{1,\ \inf_{z\in\Dinf}\smin\big(\Theta_{\psi_1,\psi_2}(z)\big)^2\Big\},\qquad
B=\max\Big\{1,\ \sup_{z\in\Dinf}\smax\big(\Theta_{\psi_1,\psi_2}(z)\big)^2\Big\}.
\]
\end{theorem}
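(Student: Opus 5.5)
The plan is to decompose $L^2([0,1]^d)$ into mutually orthogonal pieces that the system respects, and then apply a matrix version of the Hedenmalm--Lindqvist--Seip argument on each piece. Write $L^2([0,1]^d)=\C\cdot 1\oplus\bigoplus_{\beta\succ0\text{ primitive}}\mathcal K_\beta$, where
\[
\mathcal K_\beta=\spanc\{\sqrt2\cos(2\pi n\beta\cdot x),\ \sqrt2\sin(2\pi n\beta\cdot x):n\in\N\}.
\]
This is an orthogonal decomposition because every nonzero frequency $\gamma\in\Z^d$ is $\pm n\beta$ for a unique primitive $\beta\succ0$ and a unique $n\ge1$. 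By the Fourier expansion of $\psi_i$, the functions $\psi_i(n\beta\cdot x)$ lie in $\mathcal K_\beta$. Consequently the synthesis operator $\cT$ of $\mathfrak R_d(\psi_1,\psi_2)$ is the orthogonal direct sum of the identity on $\C\cdot1$ and operators $\cT_\beta$, each mapping $\ell_2(\N)^2$ onto the span of $\{\psi_1(n\beta\cdot x),\psi_2(n\beta\cdot x)\}_n$ inside $\mathcal K_\beta$. The system is a Riesz basis if and only if every $\cT_\beta$ is bounded and invertible onto $\mathcal K_\beta$ with uniform bounds, and then
\[
\|\cT\|^2=\max\{1,\sup_\beta\|\cT_\beta\|^2\},\qquad \|\cT^{-1}\|^{-2}=\min\{1,\inf_\beta\|\cT_\beta^{-1}\|^{-2}\}.
\]

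Next I identify each $\mathcal K_\beta$ unitarily with $\HN$. The isometry $U_\beta$ sends $\sqrt2\cos(2\pi n\beta\cdot x)\mapsto z^{\alpha(n)}e_1$ and $\sqrt2\sin(2\pi n\beta\cdot x)\mapsto z^{\alpha(n)}e_2$. The coefficient space is identified with $\HN$ in the same way, via $\delta_n^{(i)}\mapsto z^{\alpha(n)}e_i$. Expanding, $\psi_i(k\beta\cdot x)=\sum_n\big(a^{(i)}_n\sqrt2\cos(2\pi nk\beta\cdot x)+b^{(i)}_n\sqrt2\sin(2\pi nk\beta\cdot x)\big)$, whose image is $z^{\alpha(k)}\big(F_{a^{(i)}}e_1+F_{b^{(i)}}e_2\big)$. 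So, on polynomials, $U_\beta\cT_\beta$ is exactly multiplication by $\Theta=\Theta_{\psi_1,\psi_2}$ on $\HN$, and it is the same operator $M_\Theta$ for every $\beta$. This explains the independence from $d$ and $\beta$. The theorem therefore reduces to the following claim about $M_\Theta$ on $\HN$, which I take to be the content of Proposition~\ref{prop matrix}:
\begin{itemize}
\item $M_\Theta$ is bounded if and only if the entries of $\Theta$ are in $H_\infty(\Dinf)$, and then $\|M_\Theta\|=\sup_z\smax(\Theta(z))$;
\item $M_\Theta$ is bounded and invertible (onto) if and only if, in addition, $\inf_z|\det\Theta(z)|>0$, and then $\|M_\Theta^{-1}\|^{-1}=\inf_z\smin(\Theta(z))$.
\end{itemize}

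The proof of the claim goes through reproducing kernels. The kernel is $k_w(z)=\prod_j(1-\overline{w_j}z_j)^{-1}$ for $w\in\Dinf$, with $\pint{F,k_wv}=\pint{F(w),v}$ for $F\in\HN$ and $v\in\C^2$. Then $M_\Theta^*(k_wv)=k_w\Theta(w)^*v$. Hence $\|\Theta(w)^*v\|\le\|M_\Theta\|\|v\|$, which gives $\sup_w\smax(\Theta(w))\le\|M_\Theta\|$ and in particular boundedness of the entries. Conversely, if $\sup_w\|\Theta(w)\|\le C$, then $M_\Theta$ has norm at most $C$. To see this, pass to boundary values on $\Tinf$ via the isomorphism $H_\infty(\Dinf)\cong H_\infty(\Tinf)$ and compute in $L_2(\Tinf,\C^2)$; this needs $\|\Theta(w)\|\le C$ for a.e. $w\in\Tinf$, which follows from radial limits on finitely many variables and density of polynomials. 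For the lower bound, suppose $M_\Theta$ is invertible. Then $M_\Theta^*$ is bounded below by $\|M_\Theta^{-1}\|^{-1}$, and testing on kernels gives $\smin(\Theta(w)^*)\ge\|M_\Theta^{-1}\|^{-1}$. Since $\smax\le C$, this yields $|\det\Theta|=\smin\smax\ge c>0$. Conversely, if the entries are bounded and $|\det\Theta|\ge\delta$, then $\Theta^{-1}=\adj\Theta/\det\Theta$ has entries in $H_\infty(\Dinf)$, because $1/\det\Theta$ is holomorphic and bounded. So $M_{\Theta^{-1}}$ is a bounded two-sided inverse, and
\[
\|M_\Theta^{-1}\|=\|M_{\Theta^{-1}}\|=\sup_w\smax(\Theta(w)^{-1})=\big(\inf_w\smin(\Theta(w))\big)^{-1}.
\]
Completeness of the system comes from surjectivity of $M_\Theta$, and the constant $1$ contributes the factors $\min\{1,\cdot\}$ and $\max\{1,\cdot\}$.

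I expect the main obstacle to be rigor in the infinite-variable setting. One issue is that $\cT$ a priori only makes sense on finitely supported sequences: one must show that Bessel-ness forces the entries to be in $H_2(\Dinf)$ and bounded, which is where the kernel test is used. The other is the upper estimate $\|M_\Theta\|\le\sup_z\|\Theta(z)\|$. I would handle it by restricting to the first $N$ variables, where the estimate is the classical Hardy space argument on $\D^N$ (the finite polydisk), and then letting $N\to\infty$, using that the truncations $\Theta_N(z_1,\dots,z_N,0,\dots)$ satisfy the same pointwise bound and converge to $\Theta$ in $H_2$ on polynomials.
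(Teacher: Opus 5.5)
Your proposal is correct and follows the paper's proof essentially step by step. It uses the same orthogonal decomposition of $L^2([0,1]^d)$ by primitive directions $\beta\succ0$, and the same Bohr-lift identification of each piece with $\HN$, under which the synthesis operator becomes $M_\Theta$ with a symbol independent of $\beta$. It then uses the same reproducing-kernel and adjugate arguments as in Proposition~\ref{prop matrix}.

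The only difference is that you prove the bound $\|M_\Theta\|\le\sup_z\|\Theta(z)\|$ yourself, by truncating to finitely many variables (which works), where the paper simply cites Lemma~\ref{lem easy multiplier}.

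One small slip: the lower bound on $|\det\Theta(w)|$ follows from $\smax\ge\smin$, which gives $|\det\Theta(w)|\ge\smin(\Theta(w))^2\ge\|M_\Theta^{-1}\|^{-2}$, and not from $\smax\le C$.
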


\begin{remark}\label{rem del teo}
Note that the previous theorem  states that, in particular, the optimal Riesz bounds do not depend on $d$. 

Note also that if the four entries of $\Theta_{\psi_1,\psi_2}$ extend continuously to $\Dcl$ (the product of countably many copies of the closed unit disk, endowed with the product topology), the infimum and the supremum can  be taken over $\Tinf$, and they are attained there. If moreover $\mathfrak R_d(\psi_1,\psi_2)$ is a Riesz basis, then $\det\Theta_{\psi_1,\psi_2}$, being continuous and bounded away from $0$ on the dense set $\Dinf$, has no zeros on $\Dcl$, and the same holds for the infimum (see Lemma \ref{lem torus} below).

     In terms of Dirichlet series, Theorem \ref{thm two profiles} says that $\mathfrak{R}_d(\psi_1,\psi_2)$ is a Riesz basis of $L^2([0,1]^d)$ if and only if
     $D_{a^{(1)}}$, $D_{a^{(2)}}$, $D_{b^{(1)}}$, $D_{b^{(2)}}\in\mathcal H_\infty$ and
\[
\frac1{D_{a^{(1)}}D_{b^{(2)}}-D_{a^{(2)}}D_{b^{(1)}}}\in\mathcal H_\infty .
\]
\end{remark}

As a direct application of Theorem \ref{thm two profiles}, we present the proof of Theorem \ref{thm relu}. This corresponds to a  particular instance where the matrix $\Theta_{\psi_1,\psi_2}$  is diagonal. 
Recall the definition of the Dirichlet character $$\chi_{-4}(n)=\begin{cases}
    0 \; &\mbox{ if } n \mbox{ is even},\\
    (-1)^{(n-1)/2} \; &\mbox{ if } n \mbox{ is odd.}
\end{cases}$$
\begin{proof}[Proof of Theorem \ref{thm relu}]

For $\psi_1=\sqrt3\,\mathcal C$ and $\psi_2=\sqrt3\,\mathcal S$, their Fourier series are
\begin{equation}\label{eq triangle fourier}
\sqrt3\,\mathcal C(t)=\mu\sum_{m\ge0}\frac{\sqrt2\cos(2\pi(2m+1)t)}{(2m+1)^2},\qquad
\sqrt3\,\mathcal S(t)=\mu\sum_{m\ge0}(-1)^m\frac{\sqrt2\sin(2\pi(2m+1)t)}{(2m+1)^2},
\end{equation}
where $\mu=4\sqrt6/\pi^2$, and $\|\sqrt3\,\mathcal C\|_{L^2(0,1)}=\|\sqrt3\,\mathcal S\|_{L^2(0,1)}=1$.

\begin{figure}[t]
\centering
\includegraphics[width=\textwidth]{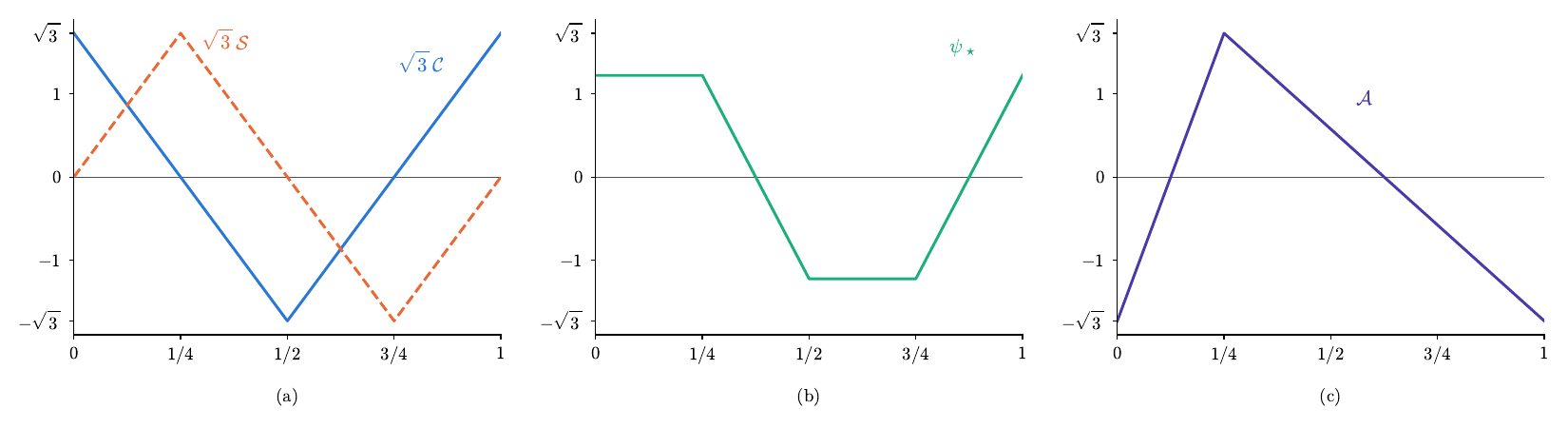}
\caption{(a) The triangle waves $\sqrt3\,\mathcal C$ and $\sqrt3\,\mathcal S$. (b) The trapezoidal wave $\psi_\star=\sqrt{3/2}\,(\mathcal C+\mathcal S)$ of Example~\ref{ex single profile}. (c) The asymmetric triangle wave $\mathcal A$ of Example~\ref{ex nondiagonal}.}
\label{fig profiles}
\end{figure}

Only odd frequencies occur in \eqref{eq triangle fourier}, and, divided by $\mu$, the coefficients are the completely multiplicative functions $n\mapsto n^{-2}$ and $n\mapsto\chi_{-4}(n)n^{-2}$ of the odd integers $n$. Hence the Dirichlet series formed with them are Euler products: if $\mathfrak{p} = (p_i)_i$ is the  sequence of prime numbers, we have
\begin{equation}\label{eq euler}
\begin{aligned}
D_{\mathcal C}(s)&=\mu\sum_{n\ \rm odd}n^{-2-s}=\mu\prod_{i=2}^\infty\big(1-p_i^{-2}p_i^{-s}\big)^{-1},\\
D_{\mathcal S}(s)&=\mu\sum_{n\ \rm odd}\chi_{-4}(n)n^{-2-s}=\mu\prod_{i=2}^\infty\big(1-\chi_{-4}(p_i)p_i^{-2}p_i^{-s}\big)^{-1}.
\end{aligned}
\end{equation}
 Their Bohr lifts are $\Theta_{\mathcal C}=\mu P$ and $\Theta_{\mathcal S}=\mu Q$, where
\begin{equation}\label{eq PQ}
P(z)=\prod_{i=2}^\infty(1-p_i^{-2}z_i)^{-1},\qquad Q(z)=\prod_{i=2}^\infty(1-\chi_{-4}(p_i)p_i^{-2}z_i)^{-1}.
\end{equation}

Therefore, the symbol of $\mathfrak R_d=\mathfrak R_d(\sqrt3\mathcal C,\sqrt3\mathcal S)$ is $\Theta=\mu\operatorname{diag}(P,Q)$, whose singular values at $z$ are $\mu|P(z)|$ and $\mu|Q(z)|$, and whose determinant is $\mu^2PQ$. For $z\in\Dinf$ one has $1-p_i^{-2}<|1-p_i^{-2}z_i|<1+p_i^{-2}$ for every $i\geq2$, hence
\[
\frac{\pi^2}{12}=\frac54\,\zeta(4)/\zeta(2)=\prod_{i=2}^\infty\frac1{1+p_i^{-2}}<|P(z)|<\prod_{i=2}^\infty\frac1{1-p_i^{-2}}=\frac34\,\zeta(2)=\frac{\pi^2}{8}.
\]

Both bounds are approached along the points with $z_i=-r$ (resp.\ $z_i=r$) for the first $M$ coordinates and $z_i=0$ otherwise, as $r\to1$ and $M\to\infty$. Since $\chi_{-4}(p)=\pm1$, the same holds for $Q$ (replace $z_i$ by $\chi_{-4}(p_i)z_i$). So $\Theta$ is bounded, $|\det\Theta|\ge\mu^2\pi^4/144>0$, and the extremes over $\Dinf$ of $\smin(\Theta)^2$ and $\smax(\Theta)^2$ are $\mu^2\pi^4/144=2/3$ and $\mu^2\pi^4/64=3/2$ (recall $\mu^2\pi^4=96$). By Theorem~\ref{thm two profiles}, $\mathfrak R_d$ is a Riesz basis with optimal bounds $A=\min\{1,2/3\}=2/3$ and $B=\max\{1,3/2\}=3/2$.
\end{proof}

The case of a single generator, dilated along all directions, is the following.

\begin{corollary}\label{cor general profile}
Let $\psi(t)=\sum_{n\ge1}\big(a_n\sqrt2\cos(2\pi nt)+b_n\sqrt2\sin(2\pi nt)\big)$ be $1$-periodic with mean zero, and let $F_a,F_b$ be the Bohr lifts of $D_a=\sum a_nn^{-s}$ and $D_b=\sum b_nn^{-s}$. Then $$\{1\}\cup\{\psi(\alpha\cdot x):\ \alpha\in\Z^d\setminus\{0\}\}$$ is a Riesz basis of $L^2([0,1]^d)$ if and only if $F_a$ and $F_b$ are bounded and bounded away from zero on $\Dinf$, that is, $D_a,D_b,1/D_a,1/D_b\in\mathcal H_\infty$. The optimal bounds are
\[
A=\min\big\{1,\ 2\inf|F_a|^2,\ 2\inf|F_b|^2\big\},\qquad B=\max\big\{1,\ 2\sup|F_a|^2,\ 2\sup|F_b|^2\big\},
\]
the extremes being taken over $\Dinf$, or over $\Tinf$ when $F_a,F_b$ extend continuously to $\Dcl$.
\end{corollary}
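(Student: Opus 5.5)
The plan is to reduce Corollary~\ref{cor general profile} to Theorem~\ref{thm two profiles} by a suitable choice of the two generators. Every $\alpha\in\Z^d\setminus\{0\}$ satisfies exactly one of $\alpha\succ0$ or $-\alpha\succ0$. Hence the system $\{1\}\cup\{\psi(\alpha\cdot x)\}_{\alpha\neq0}$ coincides with $\{1\}\cup\{\psi(\alpha\cdot x),\psi(-\alpha\cdot x)\}_{\alpha\succ0}$, which is $\mathfrak R_d(\psi_1,\psi_2)$ for $\psi_1=\psi$ and $\psi_2(t)=\psi(-t)$. Since cosine is even and sine is odd, $\psi_2$ has coefficients $a^{(2)}_n=a_n$ and $b^{(2)}_n=-b_n$. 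The symbol is therefore
\[
\Theta=\begin{pmatrix}F_a&F_a\\ F_b&-F_b\end{pmatrix}=\begin{pmatrix}F_a&0\\0&F_b\end{pmatrix}\begin{pmatrix}1&1\\1&-1\end{pmatrix}.
\]

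Next I will compute the quantities appearing in Theorem~\ref{thm two profiles}. The entries of $\Theta$ are bounded exactly when $F_a$ and $F_b$ are bounded. We have $\det\Theta=-2F_aF_b$, so, given boundedness, $\inf|\det\Theta|>0$ holds if and only if both $\inf|F_a|>0$ and $\inf|F_b|>0$. One direction is immediate from $|F_aF_b|\le|F_a|\sup|F_b|$, and the other from $\inf|F_aF_b|\ge\inf|F_a|\inf|F_b|$.

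For the singular values, the matrix $U=\frac1{\sqrt2}\begin{pmatrix}1&1\\1&-1\end{pmatrix}$ is unitary, and $\Theta=\sqrt2\operatorname{diag}(F_a,F_b)\,U$. Pointwise, then, the singular values of $\Theta(z)$ are $\sqrt2|F_a(z)|$ and $\sqrt2|F_b(z)|$. It follows that
\[
\smin(\Theta(z))^2=2\min\{|F_a(z)|^2,|F_b(z)|^2\},\qquad \smax(\Theta(z))^2=2\max\{|F_a(z)|^2,|F_b(z)|^2\}.
\]
Taking the infimum or supremum over $z\in\Dinf$ interchanges with the min or max of the two functions. Substituting into the formulas of Theorem~\ref{thm two profiles} gives exactly the stated $A$ and $B$.

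It remains to justify the two reformulations. The equivalence with $D_a,D_b,1/D_a,1/D_b\in\mathcal H_\infty$ uses the isometric identification of $\mathcal H_\infty$ with $H_\infty(\Dinf)$ via the Bohr lift, together with multiplicativity of the lift. That multiplicativity shows that $1/F_a$ is the lift of $1/D_a$, and a bounded $F_a$ bounded away from zero has a bounded holomorphic inverse on $\Dinf$. The passage to $\Tinf$ under continuous extension is Remark~\ref{rem del teo}, via Lemma~\ref{lem torus}. The only delicate point I expect is bookkeeping: checking that the index identification $\alpha\leftrightarrow\pm\alpha$ is a bijection onto the system, so that nothing is counted twice or omitted. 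No genuine obstacle beyond that is anticipated.
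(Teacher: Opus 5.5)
Your proposal is correct and follows the paper's proof essentially verbatim: you take $\psi_1=\psi$ and $\psi_2=\psi(-\,\cdot)$, factor the symbol as $\operatorname{diag}(F_a,F_b)$ times $\sqrt2$ times a unitary, and read off the singular values $\sqrt2|F_a|,\sqrt2|F_b|$ and the determinant $-2F_aF_b$ before invoking Theorem~\ref{thm two profiles}. Your additional checks fill in details the paper leaves implicit: the determinant equivalence, exchanging the extrema over $z$ with the $\min/\max$ of the two functions, and the $\mathcal H_\infty$ and $\Tinf$ reformulations.
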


\begin{example}\label{ex single profile}
Let $\psi_\star=\sqrt{3/2}\,(\mathcal C+\mathcal S)$ be a trapezoidal wave (Figure~\ref{fig profiles}), again ReLU-realizable, $1$-periodic and of norm $1$. Its coefficients are $a_n=\mu n^{-2}/\sqrt2$ and $b_n=\mu\chi_{-4}(n)n^{-2}/\sqrt2$ for odd $n$ and $0$ for even $n$, so $2|F_a|^2=|\Theta_{\mathcal C}|^2$ and $2|F_b|^2=|\Theta_{\mathcal S}|^2$. By Corollary~\ref{cor general profile} and the computation in the proof of Theorem~\ref{thm relu}, the dilates of the single function $\psi_\star$, $\{1\}\cup\{\psi_\star(\alpha\cdot x):\ \alpha\in\Z^d\setminus\{0\}\}$, form a Riesz basis of $L^2([0,1]^d)$ with optimal bounds $2/3$ and $3/2$. 
\end{example}

\begin{example}\label{ex nondiagonal}
Let $\mathcal A$ be the $1$-periodic function with
\[
\mathcal A(t)=\sqrt3\,(8t-1)\quad(0\le t\le\tfrac14),\qquad \mathcal A(t)=\frac{\sqrt3}{3}\,(5-8t)\quad(\tfrac14\le t\le1),
\]
an asymmetric triangle wave which rises on a quarter of the period and falls on the remaining three quarters (Figure~\ref{fig profiles}). It is continuous and piecewise linear, hence ReLU-realizable, with the same range $[-\sqrt3,\sqrt3]$ as $\sqrt3\,\mathcal C$, mean zero and $\|\mathcal A\|_{L^2(0,1)}=1$. Its Fourier coefficients with respect to $\sqrt2\cos(2\pi nt)$ and $\sqrt2\sin(2\pi nt)$ are
\[
a_n=-\frac{2\mu}{3}\,\frac{1-\cos(\pi n/2)}{n^2},\qquad b_n=\frac{2\mu}{3}\,\frac{\sin(\pi n/2)}{n^2},
\]
and then

\[
a_n= \begin{cases}
    -\frac{2\mu}{3n^2} & \mbox{ if } n \mbox{ is odd,}\\
    -\frac{4\mu}{3n^2} & \mbox{ if } n \equiv2\pmod4,\\
    0 & \mbox{ if } n \equiv0\pmod4,
\end{cases}
\quad \quad \mbox{and} \quad\quad 
b_n = \begin{cases}
    \frac{2\mu\chi_{-4}(n)}{3n^2} & \mbox{ if } n \mbox{ is odd,}\\
    0 & \mbox{ if } n \mbox{ is even.}
\end{cases}
\]
Regrouping the terms,
\[
D_a(s)=-\frac{2\mu}{3}\big(1+\frac{1}{2}2^{-s}\big)\sum_{n\ \rm odd}\frac{1}{n^2}n^{-s},\qquad D_b(s)=\frac{2\mu}{3}\,\sum\limits_{n=1}^\infty\frac{\chi_{-4}(n)}{n^2} n^{-s},
\]
with Bohr lifts $F_a(z)=-\frac{2\mu}{3}\big(1+\frac{z_1}{2}\big)P(z)$ and $F_b(z)=\frac{2\mu}{3}Q(z)$, where $P$ and $Q$ are the Euler products \eqref{eq PQ}. Let $G=\sum_{n=1}^\infty\chi_{-4}(n) n^{-2}=0.9159\ldots$ be Catalan's constant. The following statements are proved in Section~\ref{subsec proof example}.
\begin{enumerate}[label=(\alph*)]
\item \emph{One generator.} $\{1\}\cup\{\mathcal A(\alpha\cdot x):\alpha\in\Z^d\setminus\{0\}\}$ is a Riesz basis of $L^2([0,1]^d)$ with optimal bounds $A=4/27$ and $B=3$. The asymmetry costs a factor $9/2$ in the lower bound with respect to the symmetric generator $\psi_\star$.
\item \emph{Two generators.} $\{1\}\cup\{\sqrt3\,\mathcal C(\alpha\cdot x),\mathcal A(\alpha\cdot x)\}_{\alpha\succ0}$ is a Riesz basis of $L^2([0,1]^d)$ for every $d$. Its matrix (the \emph{symbol} of Section~\ref{sec proofs})
\[
\Theta_{\sqrt3\mathcal C,\mathcal A}(z)=\mu\begin{pmatrix}P(z)&-\frac23\big(1+\frac{z_1}{2}\big)P(z)\\[2pt] 0&\frac23\,Q(z)\end{pmatrix}
\]
is upper triangular but not diagonal, and its optimal bounds are
\[
B=\Big\|\begin{pmatrix}\sqrt{3/2}&\sqrt{3/2}\\0&\frac23\mu G\end{pmatrix}\Big\|^2=3.1949\ldots,\quad
A=\mu^2\,\Big\|\begin{pmatrix}96G/\pi^4&18/\pi^2\\0&18/\pi^2\end{pmatrix}\Big\|^{-2}=0.1391\ldots,
\]
attained at $w\equiv1$ and at the point $w^-$ of $\Tinf$ with $w^-_1=1$ and $w^-_n=-\chi_{-4}(p_n)$ for $n>1$, respectively.
\end{enumerate}
Compared with the diagonal pair $(\sqrt3\,\mathcal C,\sqrt3\,\mathcal S)$, the coupling term degrades the bounds considerably, although $\mathcal A$ and $\sqrt3\,\mathcal S$ have the same norm and the same range. Note also that at $w^-$ the value $|Q|=\pi^2/12$ is minimal but $|P|=\pi^4/(96G)$ is not: the smallest singular value is more sensitive to the entries involving $Q$ than to $P$ (Lemma~\ref{lem triangular}).
\end{example}

\section{Proofs of the main results}\label{sec proofs}

The proofs of Theorems~\ref{thm relu} and~\ref{thm two profiles} reduce, direction by direction, to the following situation.

\begin{definition}\label{def dilation space}
A \emph{dilation space with two frequency lines} is a Hilbert space $\cH$ with an orthonormal basis $\{v_{1,n},v_{2,n}\}_{\,n\in\N}$. Its \emph{dilations} are the isometries $V_m$, $m\in\N$, determined by $V_mv_{k,n}=v_{k,mn}$, and the closed span of $\{v_{k,n}\}_{n\in\N}$ is the \emph{$k$-th frequency line}.
\end{definition}

Clearly $V_1=I$ and $V_mV_{m'}=V_{mm'}$. The setting of \cite{HeLiSe} is $L^2(0,1)$ with the single line $v_n=\sqrt2\sin(\pi nx)$ and $V_mf=f(m\,\cdot)$; the spaces used in Section~\ref{subsec proof B} have the two lines $\sqrt2\cos(2\pi n\beta\cdot x)$ and $\sqrt2\sin(2\pi n\beta\cdot x)$, for a fixed primitive direction $\beta$.

Given $\varphi_1,\varphi_2\in\cH$, write $\varphi_j=\sum_{k,n}a^{(j)}_{k,n}v_{k,n}$, and let $F_{kj}\in H_2(\Dinf)$ be the Bohr lift of the Dirichlet series $D_{kj}(s)=\sum_na^{(j)}_{k,n}n^{-s}$ formed with the coefficients of $\varphi_j$ along the $k$-th line. The \emph{symbol} of the system
\[
\Phi=\{V_n\varphi_1,V_n\varphi_2\}_{\,n\in\N}
\]
is the $2\times2$ matrix of holomorphic functions $\Theta_\Phi=[F_{kj}]$ on $\Dinf$: its $j$-th column is the Bohr lift of $\varphi_j$, and it is the Bohr lift of the matrix of Dirichlet series $[D_{kj}]$.

\begin{proposition}\label{prop matrix}
Let $\cH$ be a dilation space with two frequency lines, $\varphi_1,\varphi_2\in\cH$, and let $\Theta=\Theta_\Phi$ be the symbol of $\Phi=\{V_n\varphi_1,V_n\varphi_2\}_{\,n\in\N}.$
\begin{enumerate}[label=(\alph*)]
\item\label{matrix bessel} $\Phi$ is a Bessel sequence if and only if $\Theta$ is bounded on $\Dinf$, and then its optimal Bessel bound is $\sup_{z\in\Dinf}\|\Theta(z)\|^2$.
\item\label{matrix riesz} $\Phi$ is a Riesz basis of $\cH$ if and only if $\Theta$ is bounded on $\Dinf$ and $\inf_{z\in\Dinf}|\det\Theta(z)|>0$; equivalently, if and only if $\Theta$ is bounded, $\Theta(z)$ is invertible for every $z\in\Dinf$, and $\sup_z\|\Theta(z)^{-1}\|<\infty$. In that case its optimal Riesz bounds are
\[
A=\inf_{z\in\Dinf}\smin\big(\Theta(z)\big)^2=\Big(\sup_{z\in\Dinf}\|\Theta(z)^{-1}\|\Big)^{-2},\qquad B=\sup_{z\in\Dinf}\smax\big(\Theta(z)\big)^2 .
\]
\end{enumerate}
\end{proposition}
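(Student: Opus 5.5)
The plan is to transfer everything to $\HN=H_2(\Dinf)\oplus H_2(\Dinf)$ by the unitary $U:\cH\to\HN$ with $Uv_{k,n}=z^{\alpha(n)}e_k$, where $e_1,e_2$ is the canonical basis of $\C^2$. Since $\alpha(mn)=\alpha(m)+\alpha(n)$, we have $UV_mU^{-1}=$ multiplication by $z^{\alpha(m)}$. Also $U\varphi_j$ is the $j$-th column of $\Theta$. Hence $U V_n\varphi_j=z^{\alpha(n)}\Theta e_j$, and for finitely supported $c_{j,n}$, with $G_j=\sum_n c_{j,n}z^{\alpha(n)}$, we get
\[
U\Big(\sum_{j,n}c_{j,n}V_n\varphi_j\Big)=\Theta G,\qquad G=(G_1,G_2)^T,\qquad \|G\|_{\HN}^2=\sum|c_{j,n}|^2 .
\]
So the synthesis operator of $\Phi$ is unitarily equivalent to $M_\Theta: G\mapsto \Theta G$, first on $\C^2$-valued polynomials. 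It follows that $\Phi$ is Bessel with bound $B$ iff $M_\Theta$ extends boundedly with $\|M_\Theta\|^2\le B$. It is a Riesz basis iff, in addition, $M_\Theta$ is bounded below and has dense range, i.e.\ it is invertible.

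\textbf{Part (a).} This reduces to showing $\|M_\Theta\|=\sup_z\|\Theta(z)\|$.

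For $\le$: approximate by finitely many variables. Restrict to polynomials in $z_1,\dots,z_N$ and use the finite-polydisk Hardy space $H_2(\D^N)$ with boundary values on $\T^N$. One has $\|\Theta G\|^2=\int_{\T^N}\|\Theta_N(w)G(w)\|^2\,dm$, where $\Theta_N(z)=\Theta(z_1,\dots,z_N,0,\dots)$. The point is that $\Theta_N$ is the compression of $\Theta$: projecting $\Theta G$ onto monomials in the first $N$ variables gives $\Theta_NG$. Then let $N\to\infty$. Note this approximation gives a lower bound on $\|\Theta G\|$, not an upper bound. So I would instead argue via radial boundary values on $\T^N$ with $|\Theta_N|\le \sup\|\Theta\|$, plus monotone convergence in $N$ of $\|P_N(\Theta G)\|$, where $G$ depends only on the first $N_0$ variables.

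For $\ge$, use reproducing kernels: $K_z(w)=\prod_i(1-\bar z_iw_i)^{-1}$ and $M_\Theta^*(K_z x)=K_z\Theta(z)^*x$ for $x\in\C^2$. Therefore $\|\Theta(z)^*x\|\le\|M_\Theta\|\|x\|$.

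Conversely, if $\Phi$ is Bessel, the same kernel identity shows $\Theta$ is bounded. For the full converse I would need the entries to be $H_\infty$ multipliers, which follows from the scalar HLS multiplier theorem applied entrywise: take $G=(g,0)$ and $(0,g)$.

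\textbf{Part (b).} If $\Phi$ is a Riesz basis, $M_\Theta$ is invertible, so $M_\Theta^*$ is too, with $\|(M_\Theta^*)^{-1}\|=\|M_\Theta^{-1}\|$. The kernel identity gives $\|\Theta(z)^*x\|\ge\|M_\Theta^{-1}\|^{-1}\|x\|$. Hence $\smin(\Theta(z))\ge\|M_\Theta^{-1}\|^{-1}$, and $|\det\Theta|=\smin\,\smax$ is bounded below.

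Conversely, if $\Theta$ is bounded and $|\det\Theta|\ge\delta$, then $1/\det\Theta$ is a bounded holomorphic function on $\Dinf$. So $\Theta^{-1}=\adj\Theta/\det\Theta$ has bounded holomorphic entries, and $\sup_z\|\Theta(z)^{-1}\|\le \sup\|\Theta\|/\delta<\infty$ (for $2\times2$, $\|\adj M\|=\|M\|$). This gives the stated equivalence.

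By part (a), $M_{\Theta^{-1}}$ is bounded with norm $\sup\|\Theta^{-1}\|$, and $M_\Theta M_{\Theta^{-1}}=M_{\Theta^{-1}}M_\Theta=I$. Hence $M_\Theta$ is invertible, with $\|M_\Theta^{-1}\|=\sup_z\|\Theta(z)^{-1}\|=(\inf_z\smin(\Theta(z)))^{-1}$: the inequality $\le$ comes from (a), and $\ge$ from the kernel argument above. The optimal bounds then follow from $A=\|\cT^{-1}\|^{-2}$ and $B=\|\cT\|^2$.

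\textbf{Main obstacle.} I expect the main obstacle to be the upper estimate $\|M_\Theta\|\le\sup\|\Theta\|$ in infinitely many variables. This is the matrix analogue of the HLS/$H_\infty(\Tinf)$ identification. I would handle it by invoking the isometric isomorphism $H_\infty(\Dinf)\cong H_\infty(\Tinf)$ entrywise, so that $\Theta$ has boundary values $\Theta^*(w)$ on $\Tinf$ with $\|M_\Theta G\|^2=\int_{\Tinf}\|\Theta^*(w)G(w)\|^2dm$. It then remains to check $\operatorname{ess\,sup}_w\|\Theta^*(w)\|\le\sup_z\|\Theta(z)\|$. To do this, apply the scalar isometry to $\langle\Theta x,y\rangle$ for $x,y$ in a countable dense subset of the unit sphere.
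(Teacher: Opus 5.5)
Your proposal is correct and follows essentially the paper's route. Both arguments Bohr-lift the system so that the synthesis operator becomes $M_\Theta$ on $\HN$. Both use the kernel identity $\cT^*(u\,k_z)=\Theta(z)^*u\,k_z$ to get $\|\Theta(z)\|\le\|\cT\|$ and $\smin(\Theta(z))\ge\|\cT^{-1}\|^{-1}$. Both use $\Theta^{-1}=(\det\Theta)^{-1}\adj\Theta$ for the converse.

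The only real difference is that the paper cites the bound $\|M_\Theta\|\le\|\Theta\|_\infty$ (Lemma~\ref{lem easy multiplier}), whereas you prove it. Your compression argument, $\|P_N(\Theta G)\|=\|\Theta_N G\|_{H_2(\D^N)}\le\sup\|\Theta\|\,\|G\|$ followed by monotone convergence in $N$, suffices on its own.

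Your appeal to the scalar HLS multiplier theorem in the converse of (a) is unnecessary. The kernel identity, checked on polynomials and extended by density, already shows that $\Theta$ is bounded.
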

Since the Bohr lift is multiplicative, \ref{matrix riesz} says that $\Phi$ is a Riesz basis if and only if the four $D_{kj}\in\mathcal H_\infty$ and $1/(D_{11}D_{22}-D_{12}D_{21})\in\mathcal H_\infty$. The proof uses in an essential way that the symbol is a square matrix of finite size: a $2\times2$ matrix whose adjoint is bounded below is invertible, and $\Theta^{-1}=(\det\Theta)^{-1}\adj\Theta$. The extension to infinitely many generators and frequency lines is carried out in \cite{ACFS_theory}.
\begin{remark}\label{rem matrix}
 \begin{itemize}
     \item[1)] The same proof, with one generator on a single frequency line and a scalar symbol, gives the theorem of Hedenmalm, Lindqvist and Seip \cite[Theorem~5.2]{HeLiSe}, and \ref{matrix bessel} then says that $\{\varphi(n\,\cdot)\}$ is a Bessel sequence if and only if $D_\varphi$ is a multiplier of the Hardy space $\mathcal H_2$ of Dirichlet series, that is, $D_\varphi\in\mathcal H_\infty$ \cite[Theorem~3.1]{HeLiSe}, \cite{ACS_splitting}.
     \item[2)] The previous scalar case, applied to the cosine and to the sine line, is also the content of the characterization in \cite{BillnerSchneider2026} of the sequences $\tau$ for which the system generated by $\sqrt2\sum\tau_k\cos(2\pi k\,\cdot)$ and $\sqrt2\sum\tau_k\sin(2\pi k\,\cdot)$ is a Riesz basis of the mean-zero subspace of $L^2([0,1]^d)$, namely that $\tau$ and its Dirichlet inverse be multipliers of $\ell_2$ for the Dirichlet convolution. Their sufficient condition, $\tau\in\ell_1$ and $\inf_{\Re s\ge0}|\sum\tau_nn^{-s}|>0$, implies $D,1/D\in\mathcal H_\infty$, which requires neither summable coefficients nor a bound away from zero on the closed half-plane.
 \end{itemize} 
\end{remark}

For the proof of the Proposition \ref{prop matrix}, we need to introduce some definitions. The Hardy space $H_2(\Dinf,\C^2)$ is the Hilbert space of functions $G:\Dinf\to\C^2$ of the form $G(z)=\sum_{\alpha\in\mathbb{N}_0^{(\mathbb{N})}}c_\alpha z^\alpha$, $c_\alpha\in\C^2$, with $\|G\|^2=\sum_\alpha\|c_\alpha\|^2<\infty$ (the vector-valued version of $H_2(\Dinf)$). Note that for $z$ in $\Dinf$ we have
\begin{equation}\label{eq kernel norm}
\sum_{\alpha\in\mathbb{N}_0^{(\mathbb{N})}}|z^\alpha|^2=\prod_{i=1}^\infty\frac1{1-|z_i|^2}<\infty
\end{equation} and this expression is locally bounded in $\Dinf$.
Therefore, by Cauchy--Schwarz the series defining $G$ converges  absolutely and locally uniformly on $\Dinf$.
 Hence $G$ is holomorphic, and the $c_\alpha$ are the Taylor coefficients of its restrictions to finite-dimensional slices, so they are determined by $G$. If $b_1,b_2$ is the canonical basis of $\C^2$, then $\{b_1z^\alpha,b_2z^\alpha\}_{\alpha\in\mathbb{N}_0^{(\mathbb{N})}}$ is an orthonormal basis of $\HN$ with the natural inner product, and the $\C^2$-valued polynomials (finite linear combinations of the $b_kz^\alpha$) are dense (see \cite[Chapter 13]{DirSer} for details).

Note that  $\HN$ can be seen as a Hilbert space with reproducing kernel in the following sense. For $z\in\Dinf$, let 
$$k_z(\zeta)=\sum_{\alpha\in\mathbb{N}_0^{(\mathbb{N})}}\overline z^{\,\alpha}\zeta^\alpha=\prod_{i=1}^\infty\frac1{1-\zeta_i\overline{z_i}}.$$
By \eqref{eq kernel norm}, $k_z\in H_2(\Dinf)$ with $\|k_z\|^2=k_z(z)$, and for $G=\sum c_\alpha z^\alpha\in\HN$ and $u\in\C^2$,
\begin{equation}\label{eq reproducing}
\pint{G,\,u\,k_z}=\sum_\alpha\pint{c_\alpha,u}z^\alpha=\pint{G(z),u}_{\C^2}.
\end{equation}
Consequently $\|G(z)\|\le\|G\|\,\|k_z\|$, so point evaluations are continuous and convergence in $\HN$ implies pointwise convergence. 

Let $H_\infty(\Dinf,\MN)$ be the space of bounded holomorphic functions $\Theta:\Dinf\to\MN$, with $\|\Theta\|_\infty=\sup_z\|\Theta(z)\|$. The following  lemma is a particular case of  the easy implication in  \cite[Theorem~3.1]{ACFS_multipliers}.

\begin{lemma}\label{lem easy multiplier}
Let $\Theta\in H_\infty(\Dinf,\MN)$. Then $G\mapsto\Theta G$ is a bounded operator $M_\Theta$ on $\HN$, with $\|M_\Theta\|\le\|\Theta\|_\infty$.
\end{lemma}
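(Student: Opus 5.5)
The plan is to reduce the matrix statement to the scalar multiplier inequality by working coordinatewise on a dense subspace of polynomials. Write $\Theta=[\theta_{kj}]$, where each $\theta_{kj}\in H_\infty(\Dinf)$ and $|\theta_{kj}(z)|\le\|\Theta(z)\|\le\|\Theta\|_\infty$. For a $\C^2$-valued polynomial $G=(g_1,g_2)$, the product $\Theta G$ has components $\sum_j\theta_{kj}g_j$. These belong to $H_2(\Dinf)$ because bounded holomorphic functions times polynomials lie in $H_2(\Dinf)$; indeed $\theta_{kj}\in H_2$, with $\|\theta_{kj}\|_{H_2}\le\|\theta_{kj}\|_\infty$ by the standard isometry with $H_\infty(\Tinf)\subset H_2(\Tinf)$ recalled in Section~\ref{subsec notation}, and multiplying by a monomial $z^\alpha$ only shifts coefficients. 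It then suffices to prove $\|\Theta G\|\le\|\Theta\|_\infty\|G\|$ for polynomials $G$. The operator then extends by density to all of $\HN$, and the extension agrees with pointwise multiplication because convergence in $\HN$ implies pointwise convergence by \eqref{eq reproducing}.

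The norm inequality for polynomials I would prove by passing to the torus. Under the Bohr/Fourier identification, $\|H\|_{H_2(\Dinf,\C^2)}^2=\int_{\Tinf}\|h(w)\|^2\,dm(w)$, where $h$ is the boundary function. I would restrict to finitely many variables. A polynomial $G$ depends only on $z_1,\dots,z_N$. Let $\Theta_r$ be the slice obtained by fixing $z_{N+1},\dots,z_M$ with $|z_i|\le r<1$ and the remaining coordinates $0$. Then $\Theta_r$ is holomorphic on a neighbourhood of $r\overline{\D}^M$, so on the finite torus $\{|\zeta_i|=r\}$ we have $\|\Theta(\zeta)G(\zeta)\|\le\|\Theta\|_\infty\|G(\zeta)\|$. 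Integrating over that torus controls the $H_2$ norm of the dilation $(\Theta G)(r\,\cdot)$ truncated to $M$ variables. Letting $r\to1$ and $M\to\infty$, monotone convergence of the coefficient sums gives $\sum_\alpha\|c_\alpha(\Theta G)\|^2\le\|\Theta\|_\infty^2\|G\|^2$.

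An alternative to this step works with a single Taylor coefficient at a time. Each coefficient $c_\alpha(\Theta G)$ only involves finitely many variables, namely those in the support of $\alpha$ and of $G$. So the sum over $\alpha$ supported in $\{1,\dots,M\}$ equals the $H_2(\D^M,\C^2)$ norm of the restriction of $\Theta G$ to $\D^M\times\{0\}$. On $\D^M$ the classical finite-dimensional fact applies: the $H_2$ norm is the supremum over $r<1$ of the $L^2$ means over $r\T^M$. This gives the bound for each $M$, and then I let $M\to\infty$.

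The main obstacle is justifying this finite-dimensional passage: that the $H_2(\Dinf)$ norm restricted to multi-indices supported in $\{1,\dots,M\}$ is the $H_2(\D^M)$ norm of the restriction to the slice, and that the latter is computed by the $r$-torus integrals. Both facts are standard (see \cite[Chapter 13]{DirSer}), and once they are granted the pointwise bound $\|\Theta(z)\|\le\|\Theta\|_\infty$ does all the work.
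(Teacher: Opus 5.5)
Your proof is correct. The paper gives no argument for this lemma: it only notes that it is a special case of the easy implication of the Banach-space-valued multiplier theorem \cite[Theorem~3.1]{ACFS_multipliers}.

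Your route is self-contained and specific to the Hilbert setting. You first reduce to $\C^2$-valued polynomials by density. To identify the extension with pointwise multiplication you use that norm convergence in $\HN$ implies pointwise convergence, exactly as the paper does in the proof of Proposition~\ref{prop matrix}. You then bound the coefficient sum over multi-indices supported in $\{1,\dots,M\}$ by restricting $\Theta G$ to $\D^M\times\{0\}$. On $r\T^M$ you apply Parseval together with the pointwise bound $\|\Theta(\zeta)\|\le\|\Theta\|_\infty$, which is legitimate because $r\T^M\times\{0\}\subset\Dinf$. Monotone convergence then lets $r\to1$ and $M\to\infty$.

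Your ``alternative'' paragraph is the clean form of this argument, and it is the one you should keep. The first description, with a slice $\Theta_r$ in which $z_{N+1},\dots,z_M$ are ``fixed'', is muddled. The essential point is that the Taylor coefficients of the restriction to $\D^M\times\{0\}$ are exactly the $c_\alpha(\Theta G)$ with support in $\{1,\dots,M\}$. Likewise, the identity $\|H\|^2=\int_{\Tinf}\|h\|^2\,dm$ that you mention is never used.

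What your approach buys is that it needs only finite-variable $H_2$ theory and the pointwise bound on $\Dinf$. It never requires boundary values on $\Tinf$ of a matrix-valued bounded holomorphic function, or the fact that the boundary function of $\Theta G$ is the product of boundary functions. What the citation buys is brevity, and it places the lemma inside the general vector-valued multiplier theory of \cite{ACFS_multipliers}, which also contains the converse implication.
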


Let $\cH$ be a dilation space with two frequency lines. The \emph{Bohr lift} of $\cH$ is the unitary operator
\[
\cB:\cH\to\HN,\qquad \cB(v_{k,n})=b_k\,z^{\alpha(n)}.
\]
Since $\alpha(mn)=\alpha(m)+\alpha(n)$,
\begin{equation}\label{eq intertwining}
\cB\,V_n=M_{z^{\alpha(n)}}\,\cB\qquad(n\in\N),
\end{equation}
that is, the dilation by $n$ becomes multiplication by the monomial $z^{\alpha(n)}$, the Bohr lift of $n^{-s}$. For $\varphi_j=\sum_{k,n}a^{(j)}_{k,n}v_{k,n}$ we get $\cB\varphi_j=\sum_k\big(\sum_na^{(j)}_{k,n}z^{\alpha(n)}\big)b_k=\sum_kF_{kj}b_k=\Theta_\Phi\,b_j$, the $j$-th column of the symbol, and by \eqref{eq intertwining}
\begin{equation}\label{eq synthesis}
\cB(V_n\varphi_j)=z^{\alpha(n)}\,\Theta_\Phi\,b_j=\Theta_\Phi\cdot\big(b_jz^{\alpha(n)}\big).
\end{equation}
The \emph{synthesis operator} of a family $\{f_i\}_{i\in I}$ is the map $e_i\mapsto f_i$ on the finitely supported sequences of $\ell^2(I)$. The family is a Bessel sequence with bound $B$ if and only if this map extends to a bounded operator with norm at most $\sqrt B$ \cite[Theorem~3.2.3]{Christensen2016}. Let $\cT$ be the synthesis operator of $\Phi$ transported by $\cB$, defined on the polynomials of $\HN$ by $\cT(b_jz^{\alpha(n)})=\cB(V_n\varphi_j)$, that is, by \eqref{eq synthesis},
\begin{equation}\label{eq T mult}
\cT G=\Theta_\Phi\,G\qquad\text{for every $\C^2$-valued polynomial }G .
\end{equation}
Since $\{b_jz^{\alpha(n)}\}$ is an orthonormal basis of $\HN$ and $\cB$ is unitary, Section~\ref{subsec notation} gives: $\Phi$ is a Bessel sequence with bound $B$ if and only if $\cT$ extends to a bounded operator on $\HN$ with $\|\cT\|^2\le B$, and $\Phi$ is a Riesz basis of $\cH$ with bounds $A,B$ if and only if $\cT$ extends to a bounded invertible operator on $\HN$ with $\|\cT\|^2\le B$ and $\|\cT^{-1}\|^{-2}\ge A$.

We can now prove Proposition \ref{prop matrix}.
\begin{proof}[Proof of Proposition~\ref{prop matrix}]
Write $\Theta=\Theta_\Phi$ and let $\cT$ be as in \eqref{eq T mult}. If $\cT$ extends to a bounded operator on $\HN$, then $$(\cT G)(z)=\Theta(z)G(z)$$ for all $G\in\HN$. For polynomials this identity is \eqref{eq T mult}. For $G\in\HN$ take polynomials $G_m\to G$ in $\HN$; then $\cT G_m\to\cT G$ in $\HN$, hence pointwise, while $\Theta(z)G_m(z)\to\Theta(z)G(z)$.

If $z\in\Dinf$ and $u\in \mathbb{C}^2$, then
\begin{equation}\label{eq adjoint kernel}
\cT^*(u\,k_z)=\Theta(z)^*u\;k_z.
\end{equation}
In fact, by \eqref{eq reproducing}, for every $G\in\HN$,
\[
\pint{G,\cT^*(uk_z)}=\pint{\cT G,uk_z}=\pint{\Theta(z)G(z),u}=\pint{G(z),\Theta(z)^*u}=\pint{G,\Theta(z)^*u\,k_z}.
\]
Taking norms in \eqref{eq adjoint kernel} and dividing by $\|k_z\|$ gives $\|\Theta(z)^*u\|=\|\cT^*(uk_z)\|/\|k_z\|$, which is at most $\|\cT^*\|\|u\|=\|\cT\|\|u\|$, and at least $\|(\cT^*)^{-1}\|^{-1}\|u\|=\|\cT^{-1}\|^{-1}\|u\|$ when $\cT$ is invertible.

\ref{matrix bessel} If $\Theta$ is bounded, Lemma~\ref{lem easy multiplier} and \eqref{eq T mult} show that $\cT$ extends to $M_\Theta$, so $\Phi$ is a Bessel sequence with bound $\|\Theta\|_\infty^2$. Conversely, if $\cT$ extends to a bounded operator, then we have that $\|\Theta(z)\|=\|\Theta(z)^*\|\le\|\cT\|$ for every $z$. Hence $\|\cT\|=\|\Theta\|_\infty$, and the optimal Bessel bound is $\|\cT\|^2=\|\Theta\|_\infty^2$.

\ref{matrix riesz} Let us begin by seeing that the two conditions are equivalent. Let $s_1(z)\ge s_2(z)$ be the singular values of $\Theta(z)$, so that $|\det\Theta(z)|=s_1(z)s_2(z)$, $s_1=\smax$, $s_2=\smin$, and $\|\Theta(z)^{-1}\|=s_2(z)^{-1}=s_1(z)/|\det\Theta(z)|$ when $\Theta(z)$ is invertible. If $\Theta$ is bounded and $\delta=\inf_z|\det\Theta(z)|>0$, then every $\Theta(z)$ is invertible and $\|\Theta(z)^{-1}\|\le\|\Theta\|_\infty/\delta$; conversely, if $\sup_z\|\Theta(z)^{-1}\|=c<\infty$, then $|\det\Theta(z)|\ge s_2(z)^2\ge c^{-2}$ for every $z$.

Suppose now that $\Theta$ is bounded and $\sup_z\|\Theta(z)^{-1}\|<\infty$. Given the equality $$\Theta^{-1}=(\det\Theta)^{-1}\adj\Theta,$$
then $\Theta^{-1}$ is holomorphic on $\Dinf$, because $\det\Theta$ is holomorphic, does not vanish, and it is bounded. Hence $M_\Theta$ and $M_{\Theta^{-1}}$ are bounded operators on $\HN$, by Lemma~\ref{lem easy multiplier}, and  inverse to each other. By \eqref{eq T mult}, $\cT$ extends to the invertible operator $M_\Theta$, so $\Phi$ is a Riesz basis, with $\|\cT^{-1}\|=\|M_{\Theta^{-1}}\|\le\sup_z\|\Theta(z)^{-1}\|$.

Suppose conversely that $\Phi$ is a Riesz basis, so that $\cT$ extends to a bounded invertible operator. By \ref{matrix bessel}, $\Theta$ is bounded. Then, $\|\Theta(z)^*u\|\ge\|\cT^{-1}\|^{-1}\|u\|$ for all $u\in\C^2$, so the $2\times2$ matrix $\Theta(z)^*$ is injective, hence invertible, with $\smin(\Theta(z))=\smin(\Theta(z)^*)\ge\|\cT^{-1}\|^{-1}$. Thus $\Theta(z)$ is invertible and $\|\Theta(z)^{-1}\|\le\|\cT^{-1}\|$ for every $z$.

The two paragraphs together show that $\|\cT^{-1}\|=\sup_z\|\Theta(z)^{-1}\|=\big(\inf_z\smin(\Theta(z))\big)^{-1}$ whenever $\Phi$ is a Riesz basis, and \ref{matrix bessel} shows that $\|\cT\|=\sup_z\smax(\Theta(z))$. Since the optimal Riesz bounds are $\|\cT^{-1}\|^{-2}$ and $\|\cT\|^2$, the proof is complete.
\end{proof}

Proposition~\ref{prop matrix} identifies the optimal Riesz bounds with the extremes of the singular values of $\Theta$ over $\Dinf$. Computing these extremes directly can be difficult, since $\Dinf$ is not compact and they need not be attained. When the entries of $\Theta$ extend continuously to the closed polydisk, the following lemma reduces the problem to the polytorus $\Tinf$, where the extremes are attained.

\begin{lemma}\label{lem torus}
Let $\Theta:\Dcl\to\MN$ be continuous for the product topology and holomorphic on $\Dinf$. Then $\sup_{z\in\Dinf}\|\Theta(z)\|=\max_{w\in\Tinf}\|\Theta(w)\|$. If moreover $\det\Theta$ has no zeros on $\Dcl$, then $\inf_{z\in\Dinf}\smin(\Theta(z))=\min_{w\in\Tinf}\smin(\Theta(w))$.
\end{lemma}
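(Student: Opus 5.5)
Both statements reduce to a maximum principle on $\Dcl$ plus compactness of $\Tinf$. First, $\Dcl$ and $\Tinf$ are compact in the product topology (Tychonoff), and $w\mapsto\|\Theta(w)\|$ and $w\mapsto\smin(\Theta(w))$ are continuous. So the maximum and minimum over $\Tinf$ exist. Moreover $\Dinf$ is dense in $\Dcl$: it contains all finitely supported points of the open polydisk, which are already dense. Hence $\sup_{\Dinf}\|\Theta\|=\max_{\Dcl}\|\Theta\|$, and likewise the infimum of $\smin$ over $\Dinf$ equals its minimum over $\Dcl$. It remains to show that these extremes over $\Dcl$ are reached on $\Tinf$.

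For the norm, write $\|\Theta(z)\|=\sup_{\|x\|=\|y\|=1}|\pint{\Theta(z)x,y}|$. Each $f_{x,y}(z)=\pint{\Theta(z)x,y}$ is a scalar function, continuous on $\Dcl$ and holomorphic on $\Dinf$, so it suffices to prove $\sup_{\Dcl}|f|\le\sup_{\Tinf}|f|$ for such $f$. Fix $z\in\Dcl$ and replace its coordinates one at a time by points of $\T$. For the first coordinate, $\lambda\mapsto f(\lambda,z_2,z_3,\dots)$ is continuous on $\overline{\D}$ and holomorphic on $\D$. Holomorphy in $\lambda$ holds when the other coordinates lie in $\Dinf$ (restriction to a slice), and passes to general $z_2,z_3,\dots\in\overline{\D}$ by locally uniform limits, using continuity and density. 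The one-variable maximum principle then gives $w_1\in\T$ with $|f(w_1,z_2,\dots)|\ge|f(z)|$. Doing this for coordinates $1,\dots,N$ yields points $u^{(N)}$ with $u^{(N)}_i\in\T$ for $i\le N$ and $|f(u^{(N)})|\ge|f(z)|$. By compactness a subsequence converges in the product topology to some $w\in\Tinf$, and continuity gives $|f(w)|\ge|f(z)|$.

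For $\smin$, I would use the hypothesis that $\det\Theta$ has no zeros on $\Dcl$. Then $\Theta^{-1}=(\det\Theta)^{-1}\adj\Theta$ is continuous on $\Dcl$ and holomorphic on $\Dinf$, since $\det\Theta$ and the entries of $\adj\Theta$ are. Applying the first part to $\Theta^{-1}$, together with $\smin(\Theta(z))=\|\Theta(z)^{-1}\|^{-1}$, gives the claim.

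The main obstacle is the careful version of the maximum principle on $\Dcl$: justifying holomorphy of the slice functions when some coordinates lie on $\T$, and passing to the limit over infinitely many coordinates. The iterative replacement followed by a compactness limit handles both, and it uses only the product-topology continuity in the hypothesis.
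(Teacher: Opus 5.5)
Your proof is correct and follows essentially the paper's route. Both arguments use a maximum principle on finite-dimensional slices followed by a compactness limit in $\Tinf$, applied to the coefficients $\pint{\Theta x,u}$ and then to $\Theta^{-1}=(\det\Theta)^{-1}\adj\Theta$ via $\smin=\|\Theta^{-1}\|^{-1}$. The only difference is that the paper fixes $z\in\Dinf$ and invokes the $M$-variable maximum principle on $\overline{\D}^M$ in one step. You instead replace coordinates one at a time starting from $z\in\Dcl$, which is why you need, and correctly supply, the uniform-limit argument for holomorphy of slices through points with some coordinates on $\T$.
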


\begin{proof}
Let $F:\Dcl\to\C$ be continuous and holomorphic on $\Dinf$; we show $\sup_{\Dinf}|F|=\max_{\Tinf}|F|$. By the density of $\Dinf$ in $\Dcl$ and continuity $\sup_{\Dinf}|F|\geq\max_{\Tinf}|F|$. For the other inequality, fix $z\in\Dinf$ and $M\in\N$. The function $\zeta\mapsto F(\zeta_1,\dots,\zeta_M,z_{{M+1}},z_{{M+2}},\dots)$ is continuous on $\overline\D^M$ and holomorphic on $\D^M$, so by the maximum principle there is $\eta^{(M)}\in\T^M$ with $|F(z)|\le|F(\eta^{(M)},z_{{M+1}},\dots)|$. The points $(\eta^{(M)},z_{{M+1}},\dots)$ have a subsequence converging in the compact space $\Dcl$ to a point $w$, and $|w_i|=1$ for every $i$, so $w\in\Tinf$ and $|F(z)|\le|F(w)|$. Apply this to $F=\pint{\Theta(\cdot)x,u}$, for unit vectors $x,u\in\C^2$, to get $\|\Theta(z)\|\le\sup_{\Tinf}\|\Theta\|$, and to $F=\pint{\Theta(\cdot)^{-1}x,u}$, the entries of $\Theta^{-1}=(\det\Theta)^{-1}\adj\Theta$ being continuous on $\Dcl$ and holomorphic on $\Dinf$, to get the statement for $\smin(\Theta)=\|\Theta^{-1}\|^{-1}$. The maxima and minima over the compact set $\Tinf$ are attained since $\|\Theta\|$ and $\smin(\Theta)$ are continuous.
\end{proof}

\subsection{Proof of Theorem~\ref{thm two profiles} and Corollary~\ref{cor general profile}}\label{subsec proof B}

For a primitive $\beta\in\Z^d$ with $\beta\succ0$, let
\[
\cH^c_\beta=\spanc\{v^c_{\beta,n}=\sqrt2\cos(2\pi n\beta\cdot x)\}_{n\in\N},\qquad \cH^s_\beta=\spanc\{v^s_{\beta,n}=\sqrt2\sin(2\pi n\beta\cdot x)\}_{n\in\N},
\]
and $\cH_\beta=\cH^c_\beta\oplus\cH^s_\beta$. Since every $\alpha\succ0$ is uniquely $\alpha=n\beta$ with $n\in\N$ and $\beta\succ0$ primitive, the real trigonometric system gives the orthogonal decomposition
\begin{equation}\label{eq decomposition directions}
L^2([0,1]^d)=\C\oplus\bigoplus_{\beta\succ0\ \text{primitive}}\big(\cH^c_\beta\oplus\cH^s_\beta\big).
\end{equation}
The dilation $V_mf=f(m\,\cdot)$, functions being extended periodically, is an isometry of $L^2([0,1]^d)$ mapping $v^\ast_{\beta,n}$ to $v^\ast_{\beta,mn}$ ($\ast=c,s$), so each $\cH_\beta$ is a dilation space with two frequency lines in the sense of Definition~\ref{def dilation space}.

\begin{proof}[Proof of Theorem~\ref{thm two profiles}]
Fix a primitive $\beta\succ0$ and let $\Phi_\beta=\{\psi_1(n\beta\cdot x),\psi_2(n\beta\cdot x)\}_{n\in\N}$.

Let $\lambda_d$ be the Lebesgue measure on $[0,1]^d$, identifying $\mathbb{T} = \mathbb{R}/\mathbb{Z} = [0,1)$ and taking $\pi : [0,1]^d \to \mathbb{T}$ defined by $\pi(x) = \beta\cdot x -  \lfloor \beta\cdot x \rfloor$. Then, the measure $\mu(E) = \lambda_d(\pi^{-1}(E))$ satisfies that for every $1-$periodic function $g$
$$\int\limits_{[0,1]^d} g(\beta\cdot x) dx = \int\limits_{\mathbb{T}} g(t)d\mu(t),$$
and then its Fourier coefficients are
$$\hat{\mu}(k) = \int\limits_{[0,1]^d} e^{-2\pi i k \beta\cdot x} dx = \prod\limits_{i=1}^d \int\limits_0^1e^{-2\pi i k \beta_i x_i} dx_i =\begin{cases}
    0 & \mbox{ if } k\not=0,\\
    1 & \mbox{ if } k=0.
 \end{cases}$$
By the uniqueness of the Fourier coefficients, $\mu$ is the Lebesgue measure. Since

\[
\int_{[0,1]^d}|f(\beta\cdot x)|^2\,dx=\int_{\mathbb T}\vert f(t) \vert^2\,d\mu=\int_0^1|f(t)|^2\,dt,
\]
then the map $f\mapsto f(\beta\cdot x)$ is an isometry of $L^2(0,1)$ into $L^2([0,1]^d)$. So the Fourier series of $\psi_i$ gives $\psi_i(\beta\cdot x)=\sum_na^{(i)}_nv^c_{\beta,n}+b^{(i)}_nv^s_{\beta,n}\in\cH_\beta$. Since $\psi_i(n\beta\cdot x)=V_n\psi_i(\beta\cdot x)$, $\Phi_\beta$ is the system of dilations of the two generators $\psi_1(\beta\cdot x),\psi_2(\beta\cdot x)$ of $\cH_\beta$, and the Bohr lift of $\psi_i(\beta\cdot x)$ is the column $(F_{a^{(i)}},F_{b^{(i)}})^{T}$, so the symbol of $\Phi_\beta$ is $\Theta_{\psi_1,\psi_2}$ for every $\beta$. By Proposition~\ref{prop matrix}\ref{matrix riesz}, $\Phi_\beta$ is a Riesz basis of $\cH_\beta$ if and only if the four entries of $\Theta_{\psi_1,\psi_2}$ are bounded and $\inf_z|\det\Theta_{\psi_1,\psi_2}(z)|>0$, and then its optimal bounds are $A_\Theta=\inf_z\smin(\Theta_{\psi_1,\psi_2}(z))^2$ and $B_\Theta=\sup_z\smax(\Theta_{\psi_1,\psi_2}(z))^2$.

By the uniqueness of the decomposition $\alpha=n\beta$, $\mathfrak R_d(\psi_1,\psi_2)$ is the disjoint union of $\{1\}$ and the families $\Phi_\beta$, which lie in the mutually orthogonal subspaces of \eqref{eq decomposition directions}. For a finite linear combination $f=c_0+\sum_\beta f_\beta$, with $f_\beta$ a combination of elements of $\Phi_\beta$, we have $\|f\|^2=|c_0|^2+\sum_\beta\|f_\beta\|^2$, and the coefficient norms split in the same way. Hence the Riesz inequalities hold for $\mathfrak R_d(\psi_1,\psi_2)$ with constants $A,B$ if and only if they hold with the same constants for $\{1\}$ and for every $\Phi_\beta$. Moreover $\mathfrak R_d(\psi_1,\psi_2)$ is complete if and only if each $\Phi_\beta$ is complete in $\cH_\beta$, since an element of $\cH_\beta$ orthogonal to $\Phi_\beta$ is orthogonal to the whole union, and if each $\Phi_\beta$ spans $\cH_\beta$ the union spans all the summands of \eqref{eq decomposition directions}. Therefore $\mathfrak R_d(\psi_1,\psi_2)$ is a Riesz basis if and only if every $\Phi_\beta$ is a Riesz basis of $\cH_\beta$. The bounds needed for the ``if'' direction are uniform in $\beta$ because all the $\Phi_\beta$ have the same symbol, hence the same optimal bounds $A_\Theta,B_\Theta$, and the optimal bounds of $\mathfrak R_d(\psi_1,\psi_2)$ are $\min\{1,A_\Theta\}$ and $\max\{1,B_\Theta\}$.

By Section~\ref{subsec notation}, an entry of $\Theta_{\psi_1,\psi_2}$ is bounded on $\Dinf$ if and only if the corresponding Dirichlet series belongs to $\mathcal H_\infty$, and the determinant, which is the Bohr lift of $D_{a^{(1)}}D_{b^{(2)}}-D_{a^{(2)}}D_{b^{(1)}}$ and is bounded when the entries are, is bounded away from $0$ if and only if the reciprocal of that series belongs to $\mathcal H_\infty$. 
\end{proof}

\begin{proof}[Proof of Corollary~\ref{cor general profile}]
Apply Theorem~\ref{thm two profiles} with $\psi_1=\psi$ and $\psi_2=\psi(-\,\cdot)$, whose coefficients are $(a,-b)$. Then $\{\psi_i(\alpha\cdot x):\alpha\succ0\}=\{\psi(\alpha\cdot x):\alpha\neq0\}$. We have
\[
\Theta_{\psi_1,\psi_2}=\begin{pmatrix}F_a&F_a\\F_b&-F_b\end{pmatrix}=\begin{pmatrix}F_a&0\\0&F_b\end{pmatrix}\begin{pmatrix}1&1\\1&-1\end{pmatrix},
\]
the last factor being $\sqrt2$ times a unitary. So the singular values are $\sqrt2|F_a|$ and $\sqrt2|F_b|$, and the determinant is $-2F_aF_b$, which is bounded away from zero if and only if both $F_a$ and $F_b$ are.
\end{proof}

\subsection{Proof of the claims of Example~\ref{ex nondiagonal}}\label{subsec proof example}

Let us remember that the $n$-th Fourier coefficient of $\mathcal A$ is \[
a_n = \begin{cases}
    \frac{-2\mu}{3n^2} & \mbox{ if } n \mbox{ is odd,}\\
    \frac{-4\mu}{3n^2} & \mbox{ if } n \equiv2\pmod4,\\
    0 & \mbox{ if } n \equiv0\pmod4.
\end{cases}
\quad \quad \mbox{and} \quad\quad 
b_n= \begin{cases}
    \frac{2\mu\chi_{-4}(n)}{3n^2} & \mbox{ if } n \mbox{ is odd,}\\
    0 & \mbox{ if } n \mbox{ is even,}
\end{cases}
\]

And then,
\[
D_a(s)=-\frac{2\mu}{3}\big(1+\frac{1}{2}2^{-s}\big)\sum_{n\ \rm odd}\frac{1}{n^2}n^{-s},\qquad D_b(s)=\frac{2\mu}{3}\,\sum\limits_{n=1}^\infty\frac{\chi_{-4}(n)}{n^2} n^{-s},
\]
with Bohr lifts $F_a(z)=-\frac{2\mu}{3}\big(1+\frac{z_1}{2}\big)P(z)$ and $F_b(z)=\frac{2\mu}{3}Q(z)$, where $P$ and $Q$ are the Euler products \eqref{eq PQ}. Since $\sum_{i=1}^\infty p_i^{-2}<\infty$ and $|1-p_i^{-2}z_i|\ge1-p_i^{-2}$, both products converge uniformly on $\Dcl$, so $P$ and $Q$ extend continuously to $\Dcl$ and have no zeros there.

\emph{(a) One generator.} On $\Dcl$, $\frac12\le|1+\frac{z_1}2|\le\frac32$ and $\frac{\pi^2}{12}\le|P|,|Q|\le\frac{\pi^2}{8}$, and all these extremes are attained on $\Tinf$ (at $z_1=\mp1$, $z_i=\mp1$, resp.\ $z_i=\mp\chi_{-4}(p_i)$). Hence $\min|F_a|=\frac{2\mu}{3}\cdot\frac12\cdot\frac{\pi^2}{12}=\frac{\mu\pi^2}{36}$, $\max|F_a|=\frac{\mu\pi^2}{8}$, $\min|F_b|=\frac{\mu\pi^2}{18}$ and $\max|F_b|=\frac{\mu\pi^2}{12}$, all positive, so $F_a,F_b$ are bounded and bounded away from zero, and Corollary~\ref{cor general profile} gives, using $\mu^2\pi^4=96$,
\[
A=\min\{1,\ 2\min|F_a|^2,\ 2\min|F_b|^2\}=\min\Big\{1,\ \frac4{27},\ \frac{16}{27}\Big\}=\frac4{27},\qquad
B=\max\{1,\ 3,\ \tfrac43\}=3 .
\]

\emph{(b) Two generators.} The symbol is as displayed in Example~\ref{ex nondiagonal}(b): its columns are the Bohr lifts $(\Theta_{\mathcal C},0)^T$ and $(F_a,F_b)^T$. Its entries are bounded ($|1+\frac{z_1}2|\le\frac32$ and $|P|,|Q|\le\pi^2/8$) and its determinant $\frac{2\mu^2}{3}PQ$ is bounded away from zero on $\Dinf$, so by Theorem~\ref{thm two profiles} the system is a Riesz basis for every $d$, and since the entries extend continuously to $\Dcl$ its optimal bounds are $A=\min\{1,\min_{\Tinf}\smin(\Theta)^2\}$ and $B=\max\{1,\max_{\Tinf}\|\Theta\|^2\}$. We need a lemma on triangular $2\times2$ matrices.

\begin{lemma}\label{lem triangular}
For $a,b,c\ge0$ let $\nu(a,b,c)=\big\|\left(\begin{smallmatrix}a&b\\0&c\end{smallmatrix}\right)\big\|^2$.
\begin{enumerate}[label=(\roman*)]
\item The singular values of $\left(\begin{smallmatrix}\xi&\eta\\0&\zeta\end{smallmatrix}\right)$, $\xi,\eta,\zeta\in\C$, are those of $\left(\begin{smallmatrix}|\xi|&|\eta|\\0&|\zeta|\end{smallmatrix}\right)$, and $\nu$ is nondecreasing in each of its arguments.
\item Put $R(q)=\dfrac{2q\,\big(\sqrt{1+4q^2}+2q\big)}{\sqrt{1+4q^2}+1}$ for $q>0$, then $R(q)\ge2q$ for $q\ge\frac12$. For $a,b>0$,
\[
\frac{a\,\partial_a\nu(a,a,b)}{b\,\partial_b\nu(a,a,b)}=R\Big(\frac{a^2}{b^2}\Big),\qquad
\frac{b\,\partial_b\nu(a,b,b)}{a\,\partial_a\nu(a,b,b)}=R\Big(\frac{b^2}{a^2}\Big),
\]
where $\nu(a,a,b)$ and $\nu(a,b,b)$ are regarded as functions of the two variables $(a,b)$, and all four partial derivatives are positive.
\end{enumerate}
\end{lemma}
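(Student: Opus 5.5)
For part (i) I would use unitary invariance of singular values. Multiply $\left(\begin{smallmatrix}\xi&\eta\\0&\zeta\end{smallmatrix}\right)$ on the left by $\operatorname{diag}(u_1,u_2)$ and on the right by $\operatorname{diag}(v_1,v_2)$, with $u_i,v_j\in\T$. The entries become $u_1v_1\xi$, $u_1v_2\eta$ and $u_2v_2\zeta$, and the zero entry stays zero. The phases are chosen in this order:
\begin{itemize}
\item take $v_2=1$;
\item choose $u_1$ so that $u_1\eta\ge0$;
\item then choose $v_1$ so that $u_1v_1\xi\ge0$;
\item finally choose $u_2$ so that $u_2\zeta\ge0$.
\end{itemize}
This yields the matrix of moduli with the same singular values.

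For the monotonicity of $\nu$, let $M$ be a matrix with nonnegative entries. Then $|Mx|\le M|x|$ entrywise, so $\|M\|=\sup\{\|Mx\|:\|x\|=1,\ x\ge0\}$. For each fixed $x\ge0$, $\|Mx\|$ is nondecreasing in every entry of $M$. Taking suprema gives that $\nu$ is nondecreasing in $a$, $b$ and $c$.

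For part (ii) I would work with the explicit formula for the largest eigenvalue of $M^*M$. Since $\operatorname{tr}(M^*M)=a^2+b^2+c^2=:S$ and $\det(M^*M)=a^2c^2$, we have
\[
\nu(a,b,c)=\tfrac12\Big(S+\sqrt{S^2-4a^2c^2}\Big).
\]

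For $\nu(a,a,b)$ one has $S=2a^2+b^2$ and $D:=\sqrt{S^2-4a^2b^2}=\sqrt{4a^4+b^4}$. Differentiating,
\[
a\,\partial_a\nu=2a^2\Big(1+\frac{2a^2}{D}\Big),\qquad b\,\partial_b\nu=b^2\Big(1+\frac{b^2}{D}\Big),
\]
and both are clearly positive. Put $q=a^2/b^2$ and $r=D/b^2=\sqrt{1+4q^2}$. The ratio is then
\[
\frac{2q(1+2q/r)}{1+1/r}=\frac{2q(r+2q)}{r+1}=R(q).
\]

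For $\nu(a,b,b)$ one has $S=a^2+2b^2$ and $D=\sqrt{a^4+4b^4}$. This is the same computation with the roles of $a$ and $b$ exchanged. It gives positive partial derivatives and the ratio $R(b^2/a^2)$.

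Finally, since $r+1>0$, the inequality $R(q)\ge2q$ is equivalent to $r+2q\ge r+1$, that is, to $q\ge\frac12$.

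None of these steps is a real obstacle. The only care needed is in the differentiation of the square root, together with checking that the discriminant simplifies as claimed, namely $(2a^2+b^2)^2-4a^2b^2=4a^4+b^4$. That simplification is what makes the ratios depend only on $q$.
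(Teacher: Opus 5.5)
Your proposal is correct and follows the paper's proof essentially step by step. It uses the same nonnegative-vector argument for monotonicity, the same closed form $\nu=\tfrac12\big(S+\sqrt{S^2-4a^2c^2}\big)$, and the same derivative computations leading to $R(q)$. The only deviation is cosmetic: for the first claim of (i) you remove the phases with diagonal unitaries, whereas the paper notes that the characteristic polynomial $\lambda^2-(|\xi|^2+|\eta|^2+|\zeta|^2)\lambda+|\xi\zeta|^2$ of $X^*X$ depends only on the moduli.
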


\begin{proof}
(i) The singular values are the square roots of the eigenvalues of $X^*X$, whose characteristic polynomial is $\lambda^2-(|\xi|^2+|\eta|^2+|\zeta|^2)\lambda+|\xi\zeta|^2$. For a matrix with nonnegative entries the norm is attained at a vector with nonnegative entries, and $\|Xv\|$ is then nondecreasing in each entry of $X$.

(ii) Let $S=a^2+b^2+c^2$ and $\Delta=\sqrt{S^2-4a^2c^2}$. Then we have $\nu=\sigma^2_{\mbox{max}}=(S+\Delta)/2$.

For $\nu(a,a,b)$: $S=2a^2+b^2$ and $\Delta=\sqrt{4a^4+b^4}$. Differentiating, $a\partial_a\nu=2a^2\big(1+2a^2/\Delta\big)$ and $b\partial_b\nu=b^2\big(1+b^2/\Delta\big)$, with $q=a^2/b^2$, $\Delta=b^2\sqrt{1+4q^2}$ and the quotient is $R(q)$.

For $\nu(a,b,b)$: $S=a^2+2b^2$ and $\Delta=\sqrt{a^4+4b^4}$. Therefore $a\partial_a\nu=a^2(1+a^2/\Delta)$, and again 
$b\partial_b\nu=2b^2(1+2b^2/\Delta)$, being the quotient $R(b^2/a^2)$. 

Finally $R(q)\ge2q$ if and only if $\sqrt{1+4q^2}+2q\ge\sqrt{1+4q^2}+1$, that is, $q\ge\frac12$.
\end{proof}

We now maximize $\|\Theta\|^2$ and $\|\Theta^{-1}\|^2$ over $\Tinf$. Write 
$$X=|P(w)| = \prod\limits_{i=2}^\infty \vert (1-p_i^{-2}w_i)^{-1}\vert, \quad\quad Y=|Q(w)| = \prod\limits_{i=2}^\infty \vert (1-\chi_{-4}(p_i)p_i^{-2}w_i)^{-1}\vert,$$ and $u=1+w_1/2$. Since $\Theta^{-1}=\mu^{-1}\left(\begin{smallmatrix}P^{-1}&uQ^{-1}\\0&\frac32Q^{-1}\end{smallmatrix}\right)$, Lemma~\ref{lem triangular}(i) gives
\[
\|\Theta(w)\|^2=\mu^2\,\nu\big(X,\tfrac23|u|X,\tfrac23Y\big),\qquad
\|\Theta(w)^{-1}\|^2=\mu^{-2}\,\nu\big(X^{-1},|u|Y^{-1},\tfrac32Y^{-1}\big).
\]
Both are nondecreasing in $|u|$, and $X$, $Y$ do not depend on $w_1$, while $|u|\le\frac32$ with equality if and only if $w_1=1$. So both maxima are attained with $w_1=1$, where the two expressions become $\mu^2\,\nu(X,X,\frac23Y)$ and $\mu^{-2}\,\nu(X^{-1},\frac32Y^{-1},\frac32Y^{-1})$, and only $(w_i)_{i>1}$ remains. On $\Tinf$ each factor depends only on $c_i=\Re w_i\in[-1,1]$:
\[
X=\prod_{i=2}^\infty\big(1+p_i^{-4}-2p_i^{-2}c_i\big)^{-1/2},\qquad Y=\prod_{i=2}^\infty\big(1+p_i^{-4}-2\chi_{-4}(p_i)p_i^{-2}c_i\big)^{-1/2}.
\]
Put $x=\log X$, $y=\log Y$, and regard
\[
\Psi(X,Y)=\nu(X,X,\tfrac23Y)\qquad\text{and}\qquad\Omega(X,Y)=\nu(X^{-1},\tfrac32Y^{-1},\tfrac32Y^{-1})
\]
as functions of $(x,y)$, we see that $\Psi$ is increasing in $x$ and in $y$, that $\Omega$ is decreasing in $x$ and in $y$. Applying Lemma~\ref{lem triangular}(ii) with $a=e^x$, and $b= \frac23e^y$
\[
\frac{\partial_x\Psi(e^x,e^y)}{\partial_y\Psi(e^x,e^y)}= \frac{a\partial_{a}\nu(a,a,b)}{b\partial_{b}\nu(a,a,b)}=R\Big(\frac{a^2}{b^2 }\Big) = R\Big(\frac{9X^2}{4Y^2}\Big),
\]
and with $a=e^{-x}$, $b=\frac32e^{-y}$
\[\frac{\partial_y\Omega(e^x,e^y)}{\partial_x\Omega(e^x,e^y)} =  \frac{b\partial_b\nu(a,b,b)}{a\partial_a\nu(a,b,b)}=R\Big(\frac{b^2}{a^2 }\Big) = R\Big(\frac{9X^2}{4Y^2}\Big).
\]

Since $X\ge\pi^2/12$ and $Y\le\pi^2/8$ on $\Tinf$, $\frac{9X^2}{4Y^2}\ge\frac{9(\pi^2/12)^2}{4(\pi^2/8)^2}=1$, so $R(\frac{9X^2}{4Y^2})\ge2\frac{9X^2}{4Y^2}\ge2$.

For each $i\geq 2$
\[
\frac{\partial x}{\partial c_{i}}=\frac{p_i^{-2}}{1+p_i^{-4}-2p_i^{-2}c_i},
\]
and
$$
\frac{\partial y}{\partial c_{i}}=\begin{cases}
    \frac{p_i^{-2}}{1+p_i^{-4}-2p_i^{-2}c_i}, &\mbox{ if } p_i\equiv1\pmod4,\\
    \frac{-p_i^{-2}}{1+p_i^{-4}+2p_i^{-2}c_i}, &\mbox{ if } p_i\equiv3\pmod4.
\end{cases}
$$
Therefore, for $p_i\equiv1\pmod4$ 
$\Psi$ is maximal at $c_i=1$ and $\Omega$ at $c_i=-1$. For $p_i\equiv3\pmod4$, $x$ is increasing in $c_i$, and $y$ is decreasing, with
\[
\frac{16}{25}\le\Big(\frac{1-p_i^{-2}}{1+p_i^{-2}}\Big)^2\le\frac{\partial x/\partial c_{i}}{|\partial y/\partial c_i|}\le\Big(\frac{1+p_i^{-2}}{1-p_i^{-2}}\Big)^2\le\frac{25}{16},
\]
the outer bounds being the values for $p_2=3$. Therefore
\[
\frac{\partial\Psi}{\partial c_i}=\partial_y\Psi\Big[R\Big(\frac{9X^2}{4Y^2}\Big)\frac{\partial x}{\partial c_i}+\frac{\partial y}{\partial c_i}\Big]>0,\qquad
\frac{\partial\Omega}{\partial c_i}=|\partial_x\Omega|\Big[R\Big(\frac{9X^2}{4Y^2}\Big)\Big|\frac{\partial y}{\partial c_i}\Big|-\frac{\partial x}{\partial c_i}\Big]>0,
\]
because in both brackets the ratio of the two rates is at most $25/16<2\le R(\frac{9X^2}{4Y^2})$. So $\Psi$ and $\Omega$ are increasing in $c_i$, and both are maximal at $c_i=1$. Changing the coordinates of an arbitrary $w\in\Tinf$ one at a time to these optimal values does not decrease $\Psi$ or $\Omega$, and the modified points converge in $\Tinf$ to the optimal configuration.

Then, by continuity, the maximum of $\Psi$ over $\Tinf$ is attained at $w\equiv1$, that is, at $(X,Y)=(\pi^2/8,G)$, where $G=\sum_{n=1}^\infty\chi_{-4}(n)\,n^{-2}=0.9159\ldots$ is again Catalan's constant. The maximum of $\Omega$ is attained at $w^-$, the sequence given by $w^-_1=1$ and $w^-_i=-\chi_{-4}(p_i)$ for $i\geq2$. At this point we have
\[
X=\prod_{p_i\equiv1(4)}\frac1{1+p_i^{-2}}\prod_{p_i\equiv3(4)}\frac1{1-p_i^{-2}}=\frac{\pi^4}{96\,G},\qquad Y=\prod_{i=2}^\infty\frac1{1+p_i^{-2}}=\frac{\pi^2}{12}.
\]
To see the value of $X$, multiply the product defining it by
\[
\prod_{p_i\equiv1(4)}\frac1{1-p_i^{-2}}\prod_{p_i\equiv3(4)}\frac1{1+p_i^{-2}}=G.
\]
Each prime then contributes a factor $(1-p_i^{-4})^{-1}$, so the result is $\prod_{i=2}^\infty(1-p_i^{-4})^{-1}=\frac{\pi^2}{8}\cdot\frac{\pi^2}{12}=\frac{\pi^4}{96}$, and hence $X=\pi^4/(96G)$.

Finally, at $w\equiv1$ we have $\mu\pi^2/8=\sqrt{3/2}$, while at $w^-$ we have $X^{-1}=96G/\pi^4$ and $\frac32Y^{-1}=18/\pi^2$. These are precisely the values of $B$ and $A$ stated in Example~\ref{ex nondiagonal}(b); numerically, $B=3.19495\ldots$ and $A=0.13910\ldots$, so that $\min\{1,A\}=A$ and $\max\{1,B\}=B$. \qed

\begin{remark}
For the triangle waves and for the pair of Example~\ref{ex nondiagonal} the symbol has Euler-product entries, which is what makes the constants explicit. For other piecewise linear generators the Dirichlet series will in general not be Euler products, the invertibility of the determinant must be tested on $\Dinf$ rather than on $\Tinf$ (the function $z_1$ is unimodular on $\Tinf$ but vanishes at $0$; in terms of dilations, $D_\varphi(s)=2^{-s}$ gives an orthonormal sequence which is not a basis \cite[Example~4.2]{ACS_splitting}), and the extremes will be harder to compute.

If three or more functions are dilated along all directions $\alpha\succ0$, the system cannot be a Riesz basis but it may be a frame (see \cite{ACFS_theory} for details).
\end{remark}

\section*{Declaration on the use of AI tools}
The main results of this paper were obtained before any use of AI tools. The authors used Claude (Anthropic), mainly the Opus 5.5 model, to assist in restructuring parts of the exposition, working out some of the examples, and improving the readability and language of the manuscript. The authors reviewed all AI-assisted content and take full responsibility for the entire manuscript.

\bibliographystyle{abbrv}
\bibliography{refs_bohr}

\bigskip
\begin{flushleft}
\noindent
J.~Antezana\\
Departament de Matem\`atiques i Inform\`atica, Universitat de Barcelona, Spain, and Centre de Recerca Matem\`atica, Barcelona, Spain.
\\ \texttt{jorge.antezana@ub.edu}

\medskip
\noindent
D.~Carando, T.~Fern\'andez Vidal, M. Scotti\\
Departamento de Matem\'atica, FCEN, Universidad de Buenos Aires, and IMAS (UBA--CONICET), Buenos Aires, Argentina.\\ \texttt{dcarando@dm.uba.ar}, \texttt{tfvidal@dm.uba.ar}, \texttt{mscotti@dm.uba.ar}.

\end{flushleft}

\end{document}